%
%
%

\documentclass[graybox]{svmult}


\usepackage{type1cm}        
%
\usepackage{makeidx}         
\usepackage{graphicx}        
\usepackage{multicol}        
\usepackage[bottom]{footmisc}

\usepackage{newtxtext}       %
\usepackage{newtxmath}       


\makeindex             

\usepackage[utf8]{inputenc}

\def\E{{\mathbb E}}
\def\R{{\mathbb R}}

\def\P{{\mathbb P}}

\def\Var{{\rm Var}}

\def\H{{\rm H}}

\def\Var{{\rm Var}}

\def\ep{\varepsilon}
\def\phi{\varphi}

\def\be{\begin{equation}}
\def\en{\end{equation}}
\def\bee{\begin{eqnarray*}}
\def\ene{\end{eqnarray*}}

\sloppy

\begin{document}

\title*{On rate of convergence to the Poisson law\\ of the number of cycles\\ in the generalized random graphs}
\titlerunning{Rate of convergence to the Poisson law of the numbers of cycles in GRG}
\author{ Sergey G. Bobkov, Maria A. Danshina, Vladimir V. Ulyanov}
\institute{Sergey G. Bobkov \at University of Minnesota,   Vincent Hall 228, 206 Church St SE, Minneapolis, MN 55455 USA
\at  National Research University Higher School of Economics, 101000 Moscow, Russia 
\email{bobkov@math.umn.edu}
\and Maria A. Danshina \at Moscow Center for Fundamental and Applied Mathematics, Lomonosov Moscow State University, 119991 Moscow, Russia \email{danschina.maria@yandex.ru}
\and Vladimir V. Ulyanov \at 
Lomonosov Moscow State University, 119991 Moscow, Russia \at National Research University Higher School of Economics, 101000 Moscow, Russia 
\email{vulyanov@cs.msu.ru}}
%
%
\maketitle

\abstract{Convergence of order $O(1/\sqrt{n})$ is obtained for the distance in total variation between the Poisson distribution and the distribution of the number of fixed size cycles in    generalized random graphs with random vertex weights. The weights are assumed to be   independent identically distributed random variables which have a power-law distribution. The proof is based on the Chen--Stein approach and on the derived properties of the ratio of the sum of squares of random variables and the sum of these variables. These properties can be applied to other asymptotic problems related to generalized random graphs
}

\section{Introduction}
\label{sec:1}
Complex networks attract increasing attention of researchers in various fields of science. In last years numerous network models have been proposed. Since the uncertainty and the lack of regularity in real-world networks, these models are usually random graphs. Random graphs were first defined 
in \cite{Erdos}, and independently by Gilbert in \cite{Gilbert}. The suggested models are closely related: there are $n$ isolated vertices and every possible edge occurs independently with probability $p:$ $0<p<1$. It is assumed that there are no self-loops. Later the models were generalized. A natural generalization of the Erd\H{o}s--R\'{e}nyi random graph is that the equal edge probabilities are replaced by probabilities depending on the vertex weights. Vertices with high weights are more likely to have more neighbors than vertices with small weights. Vertices with extremely high weights could act as the hubs observed in many real-world networks.

The following generalized random graph (GRG) model was first introduced by Britton et al., see \cite{Britton}. Let $V=\{1,2,..,n\}$  be the set of vertices, and $W_{i}>0$ be the weight of vertex $i$, $1 \leq i \le n.$ The edge probability of the edge between any two vertices $i$ and $j$, for $i \neq j$, is equal to 
\be \label{p_ij}
p_{ij}=\frac{W_{i}W_{j}}{L_{n}+W_{i}W_{j}}
\en
and $p_{ii}=0$ for all $i\le n.$
Here  $L_{n}=\sum_{i=1}^{n}W_{i}$ denotes the total weight of all vertices.  The weights $W_{i},$ $i = 1, 2, . . . , n$ can be taken to be deterministic or random.
If we take all $W_{i}-s$ as the same constant: $W_{i}\equiv n\lambda/(n-\lambda)$ for some $0 \le \lambda < n,$ it is easy to see that $p_{ij}=\lambda / n$ for all $1\leq i < j \leq n.$
That is, the Erd\H{o}s--R\'{e}nyi random graph with $p = \lambda/n$ is a special case of the GRG. 

There are many versions of the GRG, such as Poissonian random graph (introduced by Norros and Reittu in \cite{Norros} and studied by Bhamidi et al.\cite{Bhamidi}), rank-1 inhomogeneous random graph (see \cite{Bollobas}), random graph with given prescribed degrees (see \cite{Chung}), Chung--Lu model of heterogeneous random graph (see \cite{Chung2}) etc. The Chung--Lu model is the closest to the model of generalized random graph. Two vertices $i$ and $j$ are connected with probability $p_{ij} ={W_{i}W_{j}}/{L_{n}}$ and independently of other pairs of vertices, where  $W = (W_{1}, W_{2},...,W_{n})$ is a given sequence. It is necessary to assume that $W^{2}_{i}\leq L_{n},$ for all $i.$ Under some common conditions (see \cite{Janson}), all of the above mentioned versions of the GRG   are asymptotically equivalent, meaning that all events have asymptotically equal probabilities. The updated review on the results about these inhomogeneous random graphs see in Chapters 6 in \cite{Hofstad}.

One of the problems that arise in real networks of various nature is the spread of the virus. In \cite{Chakrabarti}, the authors proposed an approach called NLDS (nonlinear dynamic system) for modeling such processes. Consider a network of $n$ vertices represented by an undirected graph $G$. Assume an infection rate $\beta > 0$ for each connected edge that is connected to an infected vertex and a recovery rate of $\delta > 0$ for each infected individual.Define the epidemic threshold $\tau$ as a value such that 
\begin{align}
    &\beta/\delta < \tau \text{ } \Rightarrow \text{infection dies out over time} \notag \\
    & \beta/\delta > \tau \text{ }  \Rightarrow \text{infection survives and becomes an epidemic.} \notag
\end{align}
$\tau$ is related to the adjacency matrix $A$ of the graph. The matrix $A=[a_{ij}]$ is an $n\times n$ symmetric matrix defined as $a_{ij} = 1$ if vertices $i$ and $j$ are connected by an edge, and $a_{ij}$ = 0 otherwise. 
Define a walk of length $k$ in $G$ from vertex $v_{0}$ to $v_{k}$ to be an ordered sequence of vertices $(v_{0}, v_{1}, ..., v_{k}),$ with $v_{i}\in V$, such that $v_{i}$ and  $v_{i+1}$ are connected for $i = 0, 1, ..., k - 1.$ If $v_{0} = v_{k},$ then the walk is closed. A closed walk with no repeated vertices (with the exception of the first and last vertices) is called a cycle. For example, triangles, quadrangles and pentagons are cycles of length three, four, and five, respectively. In the following, the cycle will be denoted by the first $k$ vertices, without specifying the vertex $v_{k},$ which is the same as $v_{0}$: $(v_{0}, v_{1},...,v_{k-1})$.

In Theorem 1 in \cite{Chakrabarti} it has been stated that  $\tau$ is equal to $1/\lambda_{1},$ where $\lambda_{1}$ is the largest eigenvalue of the adjacency matrix $A.$ 
The following lower bound for $\lambda_{1}(A)$, was shown in \cite{Preciado}
\[
\lambda_{1}(A) \geq \frac{6\triangle+ \sqrt{36\triangle^{2}+32e^{3}/n}}{4e},
\]
where $n,$ $e,$ and $\triangle$ the number of vertices, edges and triangles in $G,$ resp. Moreover, using information about the cycle numbers of higher orders one can get more precise upper bounds for  
$\tau$. 

In \cite{Ulyanov}    the central limit theorems were proved for the total number of edges in GRG.  
There are also many results on asymptotic properties of the number of triangles in homogeneous cases. For example, for the Erd\H{o}s--R\'{e}nyi random graph, the upper tails for the distribution of the triangle number had been studied 
in \cite{Goldstein}, \cite{Demarco}, \cite{Janson2}, \cite{Kim}. Recently, 
  in \cite{Liu2020} it was shown for GRG model that asymptotic distribution of the triangle number converges to a Poisson distribution 
  under strong assumption that the vertex weights are bounded random variables.

A lot of real-world networks such as social or computer networks in Internet, see e.g. \cite{Faloutsos}, follow a so-called scale-free graph model, see Ch.1 in \cite{Hofstad}. In Ch.6 in  \cite{Hofstad}  it was shown that when the vertex weights have   approximately a power-law distribution, the GRG model leads to scale-free random graph.

In the present paper, we prove not only the convergence but we get the convergence rate of order $O(1/\sqrt{n})$ for the distance in total variation between the Poisson distribution and the distribution of the number of fixed size cycles in GRG with random vertex weights. The weights are assumed to be  independent identically distributed random variables which have a power-law distribution. The proof is based on the Chen--Stein approach and on the derived properties of the ratio of the sum of squares of random variables and the sum of these variables. These properties can be applied to other asymptotic problems related to GRG.

The main results are formulated in Section \ref{sec:2}. For their proofs, see  Section \ref{sec:4}. Section \ref{sec:3} contains auxiliary lemmas, some of which  are of independent interest.

\section {Main results} 
\label{sec:2}
Let $\{1, 2,..., n\}$ be the set of vertices, and $W_{i}$ be a weight of vertex $i:\text{ }1\leq i \leq n.$ The probability of the edge between vertices $i$ and $j$ is defined in  \eqref{p_ij}.
Let $W_{i},$ $i=1,2,...,n,$ be independent identically distributed random variables distributed as a random variable $W$. For $k\geq 3$, denote by $I(k)$ the set of potential cycles of length $k$. We have that the number of elements in $I(k)$ is equal to $(n)_{k}/(2k),$ where $(n)_{k}=n(n-1)...(n-k+1)$ is the number of ways to select k distinct vertices in order, and the factor $1/(2k)$ appears since, for $k>2$, a permutation of $k$ vertices corresponds to a choice of a cycle in $I(k)$ together with a choice of any of two orientations and $k$ starting points. For example, all six cycles $\{1,3,4\},$ $\{3,4,1\},$ $\{4,1,3\}$, $\{4,3,1\},$ $\{1,4,3\},$ $\{3,1,4\}$ are, in fact, one cycle of length $3$. For $\alpha \in I(k),$ let $Y_{\alpha}$ be the indicator that $\alpha$ occurs as a cycle in GRG. 
For example, $\P (Y_{\{1,3,4\}}=1)=p_{13}p_{34}p_{41}$. 


For any integer-valued non-negative random variables $Y$ and $Z$, denote the total variation distance between their  distributions $\mathscr{L}(Y)$ and $\mathscr{L}(Z)$ 
by 
\be \label{1}
\parallel \mathscr{L}(Y)-\mathscr{L}(Z)  \parallel \equiv sup_{\parallel h \parallel= 1} |\E h(Y) -\E h(Z) |,
\en
where $h$ is any real function defined on $\{0,1,2,...\}$ and $\parallel h \parallel \equiv \sup_{m \geq 0} |h(m)|.$

For  $k\geq 3,$ put $S_{n}(k)=\sum_{\alpha \in I(k)}Y_{\alpha}$, that is $S_{n}(k)$ is the number of cycles of length $k.$ Let $Z_{k}$ be a random variable having Poisson distribution with parameter $\lambda(k)=(\E W^{2}/\E W)^{k}/(2k).$
\begin{theorem} \label{theorem1}
For any $k\geq 3,$ one has 
\be \label{2}
\parallel \mathscr{L}(S_{n}(k))-\mathscr{L}(Z_{k})  \parallel = O(n^{-1/2}),
\en
provided that 
\be \label{asumpW}
\P(W>x)=o(x^{-2k-1}), \ \ {\rm as} \ \ x\rightarrow +\infty. 
\en
\end{theorem}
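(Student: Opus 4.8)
The plan is to run the Chen--Stein method conditionally on the weight vector $\mathbf{W}=(W_1,\dots,W_n)$ and then account separately for the randomness of $\mathbf{W}$. Conditionally on $\mathbf{W}$ the edges of the GRG are independent Bernoulli variables, so $Y_\alpha$ and $Y_\beta$ are conditionally independent whenever the cycles $\alpha$ and $\beta$ share no edge. I would therefore take as the dependency neighbourhood of $\alpha\in I(k)$ the set $B_\alpha=\{\beta\in I(k):\beta\ \text{shares at least one edge with}\ \alpha\}$ and apply the local (Arratia--Goldstein--Gordon) form of the Chen--Stein bound to the conditional law. Writing $p_\alpha=\E[Y_\alpha\mid\mathbf{W}]=\prod_{(i,j)\in\alpha}p_{ij}$ and $\lambda_n=\sum_{\alpha}p_\alpha$, the conditional total variation distance is at most $b_1+b_2$, where $b_1=\sum_\alpha\sum_{\beta\in B_\alpha}p_\alpha p_\beta$ and $b_2=\sum_\alpha\sum_{\beta\in B_\alpha,\,\beta\neq\alpha}\E[Y_\alpha Y_\beta\mid\mathbf{W}]$; the usual third term vanishes because, conditionally on $\mathbf{W}$, $Y_\alpha$ is independent of $(Y_\beta)_{\beta\notin B_\alpha}$.

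Since $\mathscr{L}(S_n(k))$ is the $\mathbf{W}$-mixture of the conditional laws, convexity of the total variation distance together with the elementary estimate $\|\mathscr{L}(\mathrm{Poisson}(\mu))-\mathscr{L}(\mathrm{Poisson}(\nu))\|\le|\mu-\nu|$ yields
\be
\|\mathscr{L}(S_n(k))-\mathscr{L}(Z_k)\|\le \E[b_1]+\E[b_2]+\E\,|\lambda_n-\lambda(k)|.
\en
For the first two terms a counting argument shows that a fixed $\alpha$ shares an edge with only $O(n^{k-2})$ cycles, while each factor $p_{ij}$ is of order $1/n$; as $|I(k)|=O(n^{k})$, the dominant contribution to $\E[b_2]$ comes from pairs sharing exactly one edge and is of order $n^{k}\cdot n^{k-2}\cdot n^{-(2k-1)}=O(n^{-1})$, and $\E[b_1]=O(n^{-2})$ is even smaller. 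Here the tail assumption \eqref{asumpW} is used to guarantee that the expectations of the weight products arising in $\E[Y_\alpha Y_\beta\mid\mathbf{W}]$ — in which the shared vertices carry higher powers of $W$ — are finite and uniformly bounded, and to discard the atypical event that some weight is very large.

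The heart of the proof is the bound $\E\,|\lambda_n-\lambda(k)|=O(n^{-1/2})$. On the high-probability event $\{\max_i W_i\le\sqrt{n}\}$ intersected with a concentration event for $L_n$, whose complement has probability $o(n^{-1/2})$ by \eqref{asumpW}, I would replace each $p_{ij}$ by $W_iW_j/L_n$ (so that every vertex of a cycle enters with exponent two) and drop the distinct-index corrections, at a total cost $O(1/n)$, reaching $\lambda_n=\tfrac{1}{2k}R_n^{\,k}+O(1/n)$ with $R_n=\sum_iW_i^2/\sum_iW_i$. As $\lambda(k)=\tfrac{1}{2k}R_\infty^{\,k}$ with $R_\infty=\E W^2/\E W$, the problem reduces to the properties of the ratio $R_n$ established in Section \ref{sec:3}: a delta-method analysis gives $\E\,|R_n^{\,k}-R_\infty^{\,k}|=O(n^{-1/2})$, since after normalisation the fluctuations of $\sum_iW_i^2$ and $\sum_iW_i$ are of order $n^{-1/2}$. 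The main obstacle is precisely this last step: one must pass from $p_{ij}$ to $W_iW_j/L_n$ uniformly in spite of possibly large weights, and obtain the sharp $L^1$ rate for $R_n^{\,k}-R_\infty^{\,k}$, which requires controlling $R_n^{\,k-1}$ (bounded by $\max_i W_i$ raised to the corresponding power) and the heavy-tail contributions — exactly the function of the moment threshold $\P(W>x)=o(x^{-2k-1})$ in \eqref{asumpW}.
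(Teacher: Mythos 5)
Your proposal follows the paper's strategy in all essentials---conditioning on the weights, the Arratia--Goldstein--Gordon bound with the edge-sharing neighbourhoods $B_\alpha$, the estimate of $\E(b_1+b_2)$, and the reduction of the Poisson parameter to the ratio statistic $T_n=\sum_i W_i^2/\sum_i W_i$---but it diverges at the parameter-comparison step, in an instructive way. The paper compares $\mathscr{L}(S_n(k))$ first with a mixed Poisson law $V$ with random parameter $\Lambda=\sum_\alpha p_\alpha$ and then bounds the remaining discrepancy by $|\E\Lambda-\lambda(k)|$, which it controls through the two-sided bounds \eqref{upper}--\eqref{lower} (the lower bound resting on the algebraic inequality $\prod_{i=1}^k(a+x_i)^{-1}\ge a^{-k}-a^{-k-1}\sum_i x_i$ rather than your truncation at $\max_i W_i\le\sqrt{n}$) together with the mean rate \eqref{1.2} of Theorem \ref{proposition:1.2}. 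You instead use convexity of total variation over the mixing distribution, which leads to $\E\,|\lambda_n-\lambda(k)|$; this is arguably the safer reduction (total variation between a mixed Poisson and a Poisson is controlled by $\E|\Lambda-\lambda|$, not by the difference of means alone), but it costs more: you need the $L^1$ rate $\E\,|T_n^k-(\E W^2/\E W)^k|=O(n^{-1/2})$, which is \emph{not} the literal statement of Theorem \ref{proposition:1.2}---that theorem bounds only the difference of expectations. The $L^1$ version can be extracted from the same proof (its final step is precisely an $L^1$ estimate via $\E\,|1-\frac1n S_{n,p}|\le n^{-1/2}\sqrt{\E X^2}+p/n$, combined with $\E\,T_n^{2(k-1)}=O(1)$, which holds under \eqref{asumpW} since $2k-1<2k+1$), but you should formulate and prove it as a separate lemma rather than invoke ``a delta-method analysis,'' since the remainder terms involve high powers of $T_n$ and the heavy tails make naive Taylor control fail.

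A second caution concerns your counting for $\E b_1$ and $\E b_2$: treating each $p_{ij}$ as ``of order $1/n$'' and asserting that the weight products are ``finite and uniformly bounded'' hides the real difficulty. Although every vertex exponent appearing in $b_1,b_2$ is at most $4$ (so the relevant moments are finite for $k\ge3$), the numerator $\prod_i W_i^{2\gamma_i}$ and the random denominator $L_n^p$ are dependent, and $L_n$ can be atypically small; moreover the paper's crude majorization sums over all tuples $\gamma\in\mathcal C(p,r)$, producing exponents up to $2\gamma_1>p$ whose raw moments are infinite under \eqref{asumpW}, so the denominator must genuinely absorb them. This is exactly what the events $A_{n,r},B_{n,r}$, the exponential bound of Lemma \ref{lemma:1.3}, and Lemmas \ref{lemma:4.1} and \ref{lemma:4.3} are engineered for, yielding $\E(b_1+b_2)=o(n^{-1}\log n)$; your outline reaches the right orders of magnitude but would need this machinery (or an equivalent) to be complete.
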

\begin{remark}
Relation \eqref{2} holds under condition that $W$ has power-law distribution. The condition on the tail behaviour of the distribution of $W$ can be replaced by stronger moment condition: the finiteness of expectation $\E W^{2k+1}.$
\end{remark}
\begin{remark}
Recently in \cite{Liu2020} it was proved by method of moments the convergence in distribution of the number of triangles   $S_{n}(3)$ in a generalized
random graphs 
to the Poisson random variable $Z_3$ under assumption that the vertex weights $W_i$-s are bounded random variables. In Theorem \ref{theorem1} we have used the Chen--Stein approach, see e.g. \cite{Goldstein89} and \cite{Goldstein}. This allows us not only to extend the convergence result to cycles of any fixed length $k$ but also to get the rate of convergence. Moreover, we replace the assumption about the boundness of $W_i$-s with the condition that $W_i$ has a power-law distribution. As we noted in the Introduction, this condition better matches real-world networks.
\end{remark}
Figures \ref{figLeft} and \ref{figRight} illustrate the results of Theorem \ref{theorem1}, with the example of the number of triangles and quadrilaterals distributions.
\begin{figure}[bh]
    \begin{multicols}{2} 
        \hfill
        \includegraphics[width=55mm]{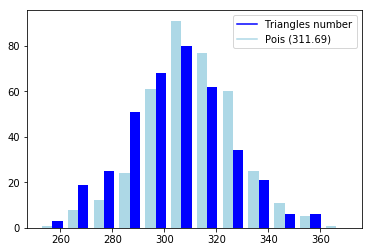}
        \hfill
        \caption{Histogram of the  number of triangles in GRG  with $2000$ vertices. The vertex weights $W_{i}=scale*Y+loc,$ where $loc=1,$ $scale=10$ and $Y \thicksim Pareto$ $(9.5)$ for all $i \leq 2000.$ The number of realizations is 400.}
        \label{figLeft}
        \hfill
        \includegraphics[width=42mm]{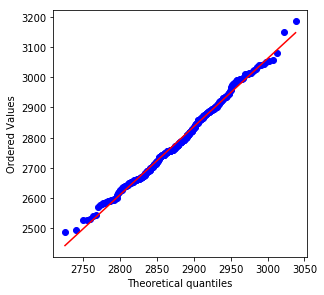}
        \hfill
        \hfill
        \caption{Q-Q plot for the  number of quadrilaterals in GRG  with $2000$ vertices and the Poisson variable  $Pois(2880.16)$. The vertex weights $W_{i}=scale*Y+loc,$ where $loc=1,$ $scale=10$ and $Y \thicksim Pareto$ $(9.5)$ for all $i \leq 2000.$ The number of realizations is 400.}
        \label{figRight}
    \end{multicols}
\end{figure}

The next results are not directly connected with number of cycles in GRG. They are an important part of the proof of the Theorem \ref{theorem1}. 
At the same time the results are of independent interest. They describe the asymptotic properties of ratio of a sum of the squares of $n$ i.i.d. random variables and a sum of these random variables. These properties can be applied to other asymptotic problems related to GRG.

Given i.i.d. positive random variables $X, X_1, \dots , X_n$,   define the statistics
$$
T_n \, = \, \frac{X_1^2 + \dots + X_n^2}{X_1 + \dots + X_n}.
$$
Assume that $X$ has a finite second moment, so that, by the law of large
numbers, with probability one
$$
\lim_{n \rightarrow \infty} T_n^p \, = \, \Big(\frac{\E X^2}{\E X}\Big)^p
$$
for any $p \geq 1$. Here we describe the tail-type and moment-type conditions which ensure that 
this convergence also holds on average. 
\begin{theorem} \label{proposition:1.2}
Given an integer $p \geq 2$, the convergence
\be \label{1.1}
\lim_{n \rightarrow \infty} \E T_n^p \, = \, (\E X^2/\E X)^p
\en
is equivalent to the tail condition
\be \label{3.4}
\P\{X \geq x\} = o(x^{-p-1}) \ \ {\rm as} \ \ x \rightarrow \infty.
\en
Moreover, if $\P\{X \geq x\} = O(x^{-p-{3}/{2}})$ as 
$x \rightarrow \infty$, then
\be \label{1.2}
\E T_n^p - (\E X^2/\E X)^p \, = \, O(n^{-1/2})
\en
The finiteness of the moment $\E X^{p+1}$ is sufficient for \eqref{1.1}
to hold, while the finiteness of the moments $\E X^q$ is necessary for
any real value $1 \leq q < p+1$.
\end{theorem}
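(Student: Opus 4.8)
Set $a=\E X$, $b=\E X^2$, write $A_n=X_1+\dots+X_n$, $B_n=X_1^2+\dots+X_n^2$, $M_n=\max_{i\le n}X_i$, and $\bar F(x)=\P\{X\ge x\}$, so that $T_n=B_n/A_n$. Two elementary facts frame the argument: $B_n\le M_nA_n$ gives the pointwise bound $T_n\le M_n$, and the strong law gives $T_n\to b/a$ almost surely, so $(b/a)^p$ is the only possible limit. The whole point is therefore that the $p$-th moment can only be inflated above $(b/a)^p$ by the rare configurations in which one coordinate is abnormally large, and \eqref{3.4} is exactly what keeps that inflation below the relevant order. I would fix the threshold $t_n=\sqrt n$, introduce the truncated variables $X_i'=X_i\mathbf 1\{X_i\le t_n\}$ together with the truncated statistic $T_n'=B_n'/A_n'$, and split
\[
\E T_n^p=\E (T_n')^p+\E\big[(T_n^p-(T_n')^p)\,\mathbf 1\{M_n> t_n\}\big],
\]
using that $T_n=T_n'$ on $\{M_n\le t_n\}$.

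\textbf{Main term.} Here the variables are bounded by $t_n$, which makes a genuine expansion legitimate. Writing $A_n'/n=a'+\eta$, $B_n'/n=b'+\xi$ with $a'=\E X'$, $b'=\E(X')^2$ and mean-zero fluctuations, one has $\E\eta^2=O(1/n)$ and $\E\xi^2\le\E(X')^4/n$, where the tail bound forces $\E(X')^4=\int_0^{t_n}x^4\,dF$ to be only of order $t_n^{(5/2-p)_+}$. A second-order Taylor expansion of $((b'+\xi)/(a'+\eta))^p$ around $(b',a')$ then gives $\E(T_n')^p=(b/a)^p+o(n^{-1/2})$: the first-order terms vanish by centering, the quadratic terms are $O(\E\xi^2+\E\eta^2)=o(n^{-1/2})$ for $p\ge2$, and the truncation biases $b-b'=\E[X^2;X>t_n]$ and $a-a'$ are $O(t_n^{1/2-p})=o(n^{-1/2})$. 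This step is routine modulo a Rosenthal-type bound controlling the higher expansion terms.

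\textbf{Correction term.} This is the substantive work; it is bounded by $\E[T_n^p;M_n> t_n]+\E[(T_n')^p;M_n> t_n]$. For the first, peel off the largest coordinate: with $R$ and $Q$ the sum and sum of squares of the remaining $n-1$ variables,
\[
T_n=\frac{M_n^2+Q}{M_n+R}\le\frac{M_n^2}{M_n+R}+\frac QR,\qquad \frac QR\le X_{(2)}.
\]
On the likely event $\{R\ge an/2\}$ this gives $T_n\le 2M_n^2/(an)+Q/R$ for $t_n< M_n\le n$ and $T_n\le M_n+Q/R$ for $M_n> n$; restricting to the event that a single coordinate exceeds $t_n$ (two or more has probability $O(n^2\bar F(t_n)^2)$ and is negligible) keeps all variables in $Q$ bounded, so the $Q/R$ part contributes $O(n\bar F(t_n))$. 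A union bound over the index of the maximum then leaves the two integrals $n(an)^{-p}\int_{\sqrt n}^{n}\theta^{2p}\,dF(\theta)$ and $n\int_{n}^{\infty}\theta^{p}\,dF(\theta)$, which integration by parts shows are $o(1)$ under \eqref{3.4} and $O(n^{-1/2})$ under $\bar F(x)=O(x^{-p-3/2})$. For the second piece, on $\{X_1> t_n\}$ one has $X_1'=t_n$, so $T_n'$ becomes a function of $X_2',\dots,X_n'$ alone, hence independent of $X_1$; since the forced value contributes $t_n^2=n$, comparable to $B_n'\approx bn$, this conditional statistic has $p$-th moment $O(1)$, whence $\E[(T_n')^p;M_n> t_n]\le n\,\E[(T_n')^p;X_1> t_n]=O(n\bar F(t_n))=o(n^{-1/2})$. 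Collecting terms yields \eqref{1.1}, and under the stronger tail the rate \eqref{1.2}.

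\textbf{Necessity, corollaries, and the main obstacle.} For the necessity of \eqref{3.4} I argue contrapositively, and the computation pinpoints the obstruction. If $\P\{X\ge x\}\ge\epsilon x^{-p-1}$ along some $x_k\to\infty$, fix $c>2a$, set $n_k=\lfloor x_k/c\rfloor$ so that $\bar F(cn_k)\ge\epsilon x_k^{-p-1}\asymp n_k^{-p-1}$, and consider $G_i=\{X_i\ge cn_k\}\cap\{\sum_{j\ne i}X_j\le 2an_k\}$, $i\le n_k$. They are pairwise disjoint (since $c>2a$ forbids a second large coordinate), each has probability at least $\frac12\bar F(cn_k)$ for large $k$, and on each, monotonicity of $x\mapsto x^2/(x+2an_k)$ gives $T_{n_k}\ge (cn_k)^2/((c+2a)n_k)=\gamma n_k$ with $\gamma=c^2/(c+2a)$. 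Summing, $\E T_{n_k}^p\ge \gamma^pn_k^{p}\cdot n_k\cdot\frac12\bar F(cn_k)$ exceeds $(b/a)^p$ by an amount bounded away from $0$, contradicting \eqref{1.1}. The moment statements follow at once: $\E X^{p+1}<\infty$ forces $x^{p+1}\P\{X\ge x\}\to0$, hence \eqref{3.4} and \eqref{1.1}; conversely \eqref{3.4} gives $\E X^q<\infty$ precisely for $q<p+1$. I expect the genuine difficulty to be exactly the correction term: because $T_n$ is a ratio, the naive bound $T_n\le M_n$ is far too lossy in the range $\sqrt n< M_n< n$, where in truth $T_n\approx M_n^2/(an)$, so obtaining the clean two-regime bound after removing the maximum, and checking that both the leftover sum of squares and the truncated statistic on the large-coordinate event contribute only at lower order, is the crux.
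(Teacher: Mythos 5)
Your route is genuinely different from the paper's. The paper never truncates: it expands $U_n^p=\sum X_{i_1}^2\cdots X_{i_p}^2$ and groups terms by repetition patterns $\gamma\in\mathcal C(p,r)$, obtaining the exact representation \eqref{3.2}; Lemmas \ref{lemma:4.1}--\ref{lemma:4.3} then bound $\E\,\xi_n(\gamma)$ pattern by pattern (your two integrals $n^{1-p}\int_{\cdot}^{n}\theta^{2p}\,dF$ and $n\int_n^\infty\theta^p\,dF$ are exactly the paper's Lemma \ref{lemma:4.2} for the tuple $\gamma=(p)$), the diagonal-free term $r=p$ is handled by dominated convergence plus the exponential bound of Lemma \ref{lemma:1.3}, and the rate comes from a Taylor/Lipschitz bound on $(\frac1nS_n)^{-p}$ giving $\E\,|S_{n,p}/n-1|=O(n^{-1/2})$. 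Necessity in the paper is a one-line Jensen bound on the $\gamma=(p)$ term (Lemma \ref{lemma:3.1}), whereas your disjoint-events construction $G_i$ is a clean, self-contained contrapositive and is correct (disjointness from $c>2a$, $\P(G_i)\ge\frac12\bar F(cn_k)$, Fatou on the complement). Your approach is more probabilistic and avoids the combinatorics; the paper's expansion has the advantage that the same pattern-by-pattern lemmas are reused verbatim for $\E R_n^{(p)}$ (Theorem \ref{proposition:3}) and for the cycle counts in Theorem \ref{theorem1}.

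There are, however, concrete gaps in your correction term, precisely where you locate the crux. First, the claim that restricting to a single large coordinate ``keeps all variables in $Q$ bounded, so the $Q/R$ part contributes $O(n\bar F(t_n))$'' does not follow as stated: boundedness only gives $Q/R\le X_{(2)}\le t_n$, hence a contribution $t_n^p\cdot n\bar F(t_n)$, which under \eqref{3.4} is $o(n^{1/2})$ --- divergent. What rescues this is independence: with $i$ the large index, $(Q_i,R_i)$ is independent of $X_i$, so the contribution is $n\bar F(t_n)\,\E[(Q_i/R_i)^p]$, and you must separately prove $\E[(Q_i/R_i)^p]=O(1)$ for truncated variables --- a nontrivial sub-claim of the same ratio-moment type (Rosenthal on $\{R_i\ge a n/2\}$ plus an exponential lower-tail bound on the complement, i.e.\ the paper's Lemma \ref{lemma:1.3}). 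Second, on the event of two or more coordinates exceeding $t_n$, the probability bound $O(n^2\bar F(t_n)^2)$ alone is insufficient because $T_n^p$ is unbounded there; you need to iterate the peeling or repeat the two-regime bound on that event. Third, on $\{R<an/2\}$ you cannot close the estimate by Cauchy--Schwarz, since under \eqref{3.4} only moments $\E X^q$ with $q<p+1$ are finite and $\E M_n^{2p}$ may be infinite; again independence of the peeled maximum from the rest ($\E[X^p1_{\{X>t_n\}}]\,e^{-cn}$) is required, mirroring the paper's treatment of $A_{n,r}$. Finally, a small inconsistency: you define $X_i'=X_i\mathbf 1\{X_i\le t_n\}$ but later use $X_1'=t_n$ on $\{X_1>t_n\}$, i.e.\ capping rather than truncating to zero; either convention works, but pick one. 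All of these are repairable within your framework, but as written the correction-term argument does not yet close.
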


Let $M_{n}=\max_{1\le i\le n} \, X_{i}.$ For $p \geq 2$, define
\be \label{R_n}
R_{n}^{(p)}=T_{n}^{p}M_{n}^{2}/(X_{1}+X_{2}+...+X_{n}).
\en

By the law of large numbers, $R_n^{(p)} \rightarrow 0$ as $n \rightarrow \infty$ a.s.,
under mild moment assumptions. The next theorem gives the order of convergence of $\E R_n^{(p)}$ to zero under tail-type and moment-type  conditions.   
\begin{theorem} \label{proposition:3}
 Given an integer $p \geq 2$, if $\P(X\ge x)=O(x^{-p-7/2})$ as $x\rightarrow +\infty,$ then  
\be \label{th-3}
\E R_{n}^{(p)} \, = \, O(n^{-1/2}).
\en
When $p>8$ and $\E X^{p+4}$ is finite, the rate can be improved to
\be \label{th-31}
\E R_n^{(p)} = O(n^{-{(p-2)}/{(p + 4)}}).
\en
Moreover, if
$\E\, e^{\ep X} < \infty$ for some $\ep > 0$, then
\be \label{8.2}
\E R_n^{(p)} = O\Big(\frac{(\log n)^2}{n}\Big).
\en
\end{theorem}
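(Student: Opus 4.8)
The starting point is the identity
\be
R_n^{(p)} = T_n^p\,\frac{M_n^2}{S_n}, \qquad S_n = X_1 + \dots + X_n,
\en
together with the elementary bounds $M_n^2 \le X_1^2 + \dots + X_n^2$ (so $M_n^2/S_n \le T_n$) and $X_i^2 \le M_n X_i$ (so $T_n \le M_n$, hence $R_n^{(p)} \le M_n^{p+1}$, using $S_n \ge M_n$). First I would dispose of the atypical event $A^c$, where $A = \{S_n \ge \frac12\,(\E X)\,n\}$. Since the $X_i$ are nonnegative, $\E e^{-sX} \le 1$ for every $s \ge 0$, and a one-sided Chernoff estimate gives $\P(A^c) \le \rho^n$ with some $\rho = \rho(s) < 1$; this needs only $\E X < \infty$ and nondegeneracy of $X$, and in particular holds in all three regimes. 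On $A^c$ I would use $R_n^{(p)} \le M_n^{p+1}$ and Hölder's inequality, $\E[M_n^{p+1}\mathbf{1}_{A^c}] \le (\E M_n^{(p+1)r})^{1/r}\,\P(A^c)^{1/r'}$ with $r>1$ so small that $(p+1)r$ stays below the available moment order; since $\E M_n^{(p+1)r}\le n\,\E X^{(p+1)r} = O(n)$ while $\P(A^c)$ is exponentially small, this contribution is smaller than any power of $n$ and may be discarded.

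On $A$ one has $M_n^2/S_n \le 2M_n^2/((\E X)\,n)$, so everything reduces to estimating $\E[T_n^p M_n^2]$ and dividing by $n$. The natural device is Hölder's inequality with conjugate exponents $r,s$:
\be
\E\big[T_n^p M_n^2\big] \le \big(\E T_n^{pr}\big)^{1/r}\,\big(\E M_n^{2s}\big)^{1/s}.
\en
For the first factor I would invoke Theorem \ref{proposition:1.2}: under the relevant tail or moment hypothesis $\E T_n^{pr}$ converges to $(\E X^2/\E X)^{pr}$, hence is bounded, provided $pr$ (chosen to be an integer $\ge 2$) does not exceed the order of moments supplied. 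For the second factor I would use the maximal estimate obtained by splitting $\E M_n^m = \int_0^\infty m x^{m-1}\P(M_n \ge x)\,dx$ at the level $x \asymp n^{1/\beta}$ and bounding $\P(M_n \ge x) \le \min\{1,\,n\,\P(X \ge x)\}$; under $\P(X \ge x) = O(x^{-\beta})$ this yields $\E M_n^m = O(n^{m/\beta})$ for every $m < \beta$, so that $(\E M_n^{2s})^{1/s} = O(n^{2/\beta})$ \emph{irrespective} of the precise value of $s$.

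The three regimes then differ only in the input for $M_n$ and in the admissible exponents. For \eqref{th-3}, with $\beta = p + 7/2$, I would take $pr = p+2$ and $2s = p+2$ (both integers within reach, since $\E T_n^{p+2}$ is bounded and $\E M_n^{p+2}$ is finite because $\beta > p+2$); this gives $\E[T_n^p M_n^2] = O(n^{2/\beta})$ and hence $\E R_n^{(p)} = O(n^{2/\beta - 1}) = O(n^{-1/2})$, the last step using $\beta = p + 7/2 \ge 4$. For \eqref{th-31} only the moment $\E X^{p+4}$ is assumed, so $p+4$ plays the role of $\beta$ in the maximal bound, giving $(\E M_n^{2s})^{1/s} = O(n^{2/(p+4)})$ and therefore $\E R_n^{(p)} = O(n^{2/(p+4)-1})$, an exponent at most $-(p-2)/(p+4)$; the restriction $p>8$ is exactly $(p-2)/(p+4) > 1/2$, i.e. what makes \eqref{th-31} an improvement on \eqref{th-3}. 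For \eqref{8.2}, the exponential moment gives $\P(X \ge x) = O(e^{-\ep x})$, whence $\E M_n^m = O((\log n)^m)$ (split at $x \asymp \ep^{-1}\log n$); taking $s$ close to $1$ and using that $\E T_n^{pr}$ is finite for all $r$ yields $\E[T_n^p M_n^2] = O((\log n)^2)$, so $\E R_n^{(p)} = O((\log n)^2/n)$.

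The main difficulty, and the reason the naive decoupling $\E[T_n^p M_n^2]\approx (\E X^2/\E X)^p\,\E M_n^2$ is not legitimate, is the positive correlation between $T_n$ and $M_n$: a single anomalously large weight simultaneously inflates the maximum and the ratio $T_n$ (indeed $M_n^2/S_n$ is one of the summands of $T_n$). The asymmetric Hölder split — taking $r$ only slightly above $1$, so that almost all of the integrability budget is spent on $M_n$ while $\E T_n^{pr}$ is still controlled by Theorem \ref{proposition:1.2} — is precisely what resolves this, and the somewhat unusual thresholds $p+7/2$ and $p+4$ are tuned so that the two factors $\E T_n^{pr}$ and $\E M_n^{2s}$ stay simultaneously finite. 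The remaining work is bookkeeping: checking in each regime that the chosen exponents keep both factors finite, and that the exceptional contribution from $A^c$ is negligible.
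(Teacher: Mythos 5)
Your proof is correct, and it takes a genuinely different route from the paper's. The paper never decouples $T_n^p$ from $M_n^2$: it expands $U_n^p$ over the repetition patterns $\gamma \in \mathcal C(p,r)$ as in \eqref{6.2}, reduces everything to Lemma \ref{lemma:6.1}, bounds each diagonal term carrying the extra factor $X_k^2$ by hand in Lemmas \ref{lemma:7.1}--\ref{lemma:7.4} (the lower-deviation event being handled by Lemma \ref{lemma:1.3}, the analogue of your Chernoff step), and then extracts all three regimes from the single inequality \eqref{8.1} by tuning $\alpha$ and the index $q$ in $\E M_n^2 \leq n^{2/q}\,(\E X^q)^{2/q}$. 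You instead use Theorem \ref{proposition:1.2} as a black box for the uniform boundedness of $\E T_n^{p+2}$ — legitimate, since its proof is independent of the present theorem, and the hypothesis $\P\{X \geq x\} = O(x^{-p-7/2})$ comfortably implies the required $o(x^{-p-3})$ — and spend the remaining integrability on $M_n$ via the asymmetric H\"older split with the conjugate pair $pr = p+2$, $2s = p+2$ (indeed $p/(p+2) + 2/(p+2) = 1$, and $2s < p + 7/2$, so both factors are in reach) together with the standard maximal estimate $\E M_n^m = O(n^{m/\beta})$ for $m < \beta$; all the exponent arithmetic checks out. Your route is shorter, avoids the combinatorial case analysis entirely, and actually yields strictly better exponents: $O(n^{2/(p+7/2)-1})$ in place of \eqref{th-3} (the paper's $n^{-1/2}$ is an artifact of the bottleneck term $n^{-(\alpha-3)}$ in \eqref{8.1}, not of the tail assumption) and $O(n^{-(p+2)/(p+4)})$ in place of \eqref{th-31}, valid for every $p \geq 2$, with $p > 8$ needed only for \eqref{th-31} to improve on \eqref{th-3}, exactly as you observe; in the exponential regime both arguments hinge on $\E M_n^2 = O((\log n)^2)$, and in your setup any admissible $s$ works since $(\E M_n^{2s})^{1/s} = O((\log n)^2)$ regardless, so "taking $s$ close to $1$" is not even needed. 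What the paper's heavier machinery buys is self-containedness — Lemmas \ref{lemma:7.1}--\ref{lemma:7.4} are the same apparatus already built for Theorem \ref{proposition:1.2}, so the two theorems share one toolkit — whereas your argument buys brevity and sharper rates at the cost of slightly more generous (but here available) integrability bookkeeping.
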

 

\section{Auxiliary lemmas}
\label{sec:3}

\begin{lemma} \label{lemma:1.3}
Let $S_n = \eta_1 + \dots + \eta_n$ be the sum of independent 
random variables $\eta_k \geq 0$ with finite second moment, such that $\E S_n = n$ 
and $\Var(S_n) = \sigma^2 n$. Then, for any $0 < \lambda < 1$, one has
\be \label{exp}
\P\{S_n \leq \lambda n\} \leq \exp\bigg\{- \frac{(1-\lambda)^2}{2\,
\big[\sigma^2 + \max_k\, (\E \eta_k)^2\big]}\,n\bigg\}.
\en
\end{lemma}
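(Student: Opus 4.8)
The plan is to run the exponential (Chernoff) method on the lower tail. For any $t>0$, since $\{S_n \le \lambda n\} = \{e^{-tS_n} \ge e^{-t\lambda n}\}$, Markov's inequality gives
$$
\P\{S_n \le \lambda n\} \le e^{t\lambda n}\,\E e^{-tS_n} = e^{t\lambda n}\prod_{k=1}^n \E e^{-t\eta_k},
$$
the product arising from independence. Thus everything reduces to a good upper bound on each Laplace transform $\E e^{-t\eta_k}$ at the value of $t$ we will eventually optimize.

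The key elementary ingredient is the inequality $e^{-x} \le 1 - x + \tfrac{x^2}{2}$, valid for all $x \ge 0$ (one checks it by noting that the difference vanishes to second order at $0$ and has nonnegative second derivative $1-e^{-x}$ on $[0,\infty)$). Since $\eta_k \ge 0$ and $t>0$, I apply this with $x = t\eta_k$ and take expectations to get $\E e^{-t\eta_k} \le 1 - t\,\E\eta_k + \tfrac{t^2}{2}\,\E\eta_k^2$, and then use $1+u \le e^u$ to pass to $\E e^{-t\eta_k} \le \exp\{-t\,\E\eta_k + \tfrac{t^2}{2}\,\E\eta_k^2\}$. Multiplying over $k$ converts the product into a single exponential of $-t\sum_k \E\eta_k + \tfrac{t^2}{2}\sum_k \E\eta_k^2$.

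Now I insert the hypotheses. The first sum is exactly $\sum_k \E\eta_k = \E S_n = n$. For the second, I write $\E\eta_k^2 = \Var(\eta_k) + (\E\eta_k)^2$, so that $\sum_k \E\eta_k^2 = \Var(S_n) + \sum_k (\E\eta_k)^2 = \sigma^2 n + \sum_k (\E\eta_k)^2$. The decisive (though crude) step is to bound $\sum_k (\E\eta_k)^2 \le n \max_k (\E\eta_k)^2$, which yields $\sum_k \E\eta_k^2 \le n\big(\sigma^2 + \max_k (\E\eta_k)^2\big)$; choosing this bound rather than the sharper $\sum_k (\E\eta_k)^2 \le n\max_k \E\eta_k$ is precisely what produces the constant in the statement. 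Combining, the whole estimate collapses to $\P\{S_n \le \lambda n\} \le \exp\{n(-t(1-\lambda) + \tfrac{t^2}{2}c)\}$ with $c = \sigma^2 + \max_k (\E\eta_k)^2$.

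It remains to optimize the quadratic exponent $f(t) = -t(1-\lambda) + \tfrac{t^2}{2}c$ over $t>0$. Its minimizer is $t^\ast = (1-\lambda)/c > 0$ (positivity is guaranteed by $\lambda < 1$), with minimal value $f(t^\ast) = -(1-\lambda)^2/(2c)$, giving exactly \eqref{exp}. There is no serious obstacle here: the argument is entirely self-contained and the only points demanding care are the verification of the scalar inequality $e^{-x}\le 1-x+\tfrac{x^2}{2}$ on $[0,\infty)$ and the variance–second-moment bookkeeping that pins down the stated constant.
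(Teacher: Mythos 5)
Your proof is correct and is essentially identical to the paper's own argument: both apply Markov's inequality to $e^{-tS_n}$, bound each factor via $e^{-x}\le 1-x+\tfrac{x^2}{2}\le \exp\{-x+\tfrac{x^2}{2}\}$ (the paper phrases this as convexity plus Taylor expansion of $u_k(t)=\E\,e^{-t\eta_k}$), use $\sum_k \E\eta_k^2=\sigma^2 n+\sum_k(\E\eta_k)^2\le \big(\sigma^2+\max_k(\E\eta_k)^2\big)n$, and optimize at $t^\ast=(1-\lambda)/c$. No gaps; the bookkeeping and the optimization match the paper's exactly.
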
 
\begin{proof} 
We use here the standard arguments. Fix a parameter $t>0$.
We have
$$
\E\, e^{-t S_n} \geq e^{-\lambda t n}\, \P\{S_n \leq \lambda n\}. 
$$
Every function $u_k(t) = \E\, e^{-t \xi_k}$ is positive, 
convex, and admits Taylor's expansion near zero up to the quadratic form, 
which implies that
$$
u_k(t) \leq 1 - t\, \E \xi_k + \frac{t^2}{2}\, \E \xi_k^2
\leq \exp\Big\{ - t\, \E \xi_k + \frac{t^2}{2}\, \E \xi_k^2\Big\}.
$$
Multiplying these inequalities, we get
$$
\E\, e^{-t S_n} \leq \exp\Big\{-t n + \frac{bt^2}{2}\Big\}, \quad 
b = \sum_{k=1}^n  \E \xi_k^2.
$$
The two bounds yield
$$
\P\{S_n \leq \lambda n\} \, \leq \, \exp\Big\{-(1 - \lambda)n t + bt^2/2\Big\},
$$
and after optimization over $t$ (in fact, $t = \frac{1-\lambda}{b}\,n$), 
we arrive at the exponential bound
$$
\P\{S_n \leq \lambda n\} \leq 
\exp\Big\{- \frac{(1-\lambda)^2}{2b}\,n^2\Big\}.
$$
Note that
$$
b = \Var(S_n) + \sum_{k=1}^n\, (\E \xi_k)^2 \leq 
\Big(\sigma^2 + \max_k\, (\E \xi_k)^2\Big)\,n,
$$
and \eqref{exp} follows.
\end{proof}





For further lemmas we need additional notation.

Denote by $F(x) = \P\{X \leq x\}$ ($x \in \R$) the distribution function 
of the random variable $X$ and put
\be \label{eps}
\ep_q(x) = x^q\,(1-F(x)), \quad x \geq 0, \ q>0. \nonumber
\en

Raising the sum $U_n = X_1^2 + \dots + X_n^2$ to the power $p$ with $n \geq 2p$, 
we have
\be \label{3.1}
U_n^p \, = \, \sum X_{i_1}^2 \dots X_{i_p}^2,
\en
where the summation is performed over all collections of numbers
$i_1,\dots,i_p \in \{1,\dots,n\}$. 
 For $r = 1,\dots,p$, 
we denoted by $\mathcal C(p,r)$ the collection  of all tuples
$\gamma = (\gamma_1,\dots,\gamma_r)$ of positive integers such that 
$\gamma_1 + \dots + \gamma_r = p$. For any $\gamma \in \mathcal C(p,r)$, 
there are $n(n-1)\dots (n-r+1)$ sequences $X_{i_1},\dots, X_{i_p}$ 
with $r$ distinct terms that are repeated $\gamma_1,\dots,\gamma_r$ times, resp. 
Therefore, by the i.i.d. assumption,
\be \label{3.2}
\E T_n^p \ = \ \sum_{r=1}^p \frac{n(n-1)\dots (n-r+1)}{n^p}
\sum_{\gamma \in \mathcal C(p,r)} \E \xi_n(\gamma),
\en
where
\be \label{xi}
\xi_n(\gamma) = {X_1^{2\gamma_1} \dots 
X_r^{2\gamma_r}}/{(\frac{1}{n}\,S_r + \frac{1}{n}\, S_{n,r})^p}\nonumber
\en
and 
$$
S_r = X_1 + \dots + X_r, \quad S_{n,r} = X_{r+1} + \dots + X_n.
$$ 
\long\gdef\COMMENT#100{
So, consider the collection $\mathcal C(p,r)$ of all tuples
$\gamma = (\gamma_1,\dots,\gamma_r)$ of positive integers  such that
$\gamma_1 + \dots + \gamma_r = p$. Let us gather together the sequences 
$X_{i_1},\dots, X_{i_p}$ in \eqref{4.1} in which there are $r$ terms that are 
repeated $\gamma_1,\dots,\gamma_r$ times. 

For example, the tuple $\gamma = (1,\dots,1)$ of length $r=p$ corresponds to the sequences $X_{i_1},\dots, X_{i_p}$ with distinct indexes, and there are exactly $n(n-1)\dots (n-p+1)$ such sequences. The tuple $\gamma = (p)$ of length $r=1$ corresponds to the sequences $X_{i_1},\dots, X_{i_p}$ with equal indexes, and there are exactly $n$ such sequences. More generally, for any fixed $\gamma \in \mathcal C(p,r)$, there are exactly $n(n-1)\dots (n-r+1)$ sequences with a required property.

Put
$$
S_n = X_1 + \dots + X_n, \quad S_{n,r} = X_{r+1} + \dots + X_n.
$$ 
Since $T_n^p = \frac{U_n^p}{S_n^p}$, by the i.i.d assumption,
we have the representation generalizing \eqref{3.1}, namely
}
\long\gdef\COMMENT#101{
For the particular collection $\gamma = (p)$ with $r=1$ we have
$\xi_n(\gamma) = \frac{X_1^{2p}}{(\frac{1}{n}\,X_1 + \frac{1}{n}\, S_{n,1})^p}$,
\bee
\E \xi_n(\gamma) 
 & \geq &
\E_{X_1}\,\frac{X_1^{2p}}{(\frac{1}{n}\,X_1 + \frac{1}{n}\,\E_{S_{n,1}}\, S_{n,1})^p}\\
 & = &
\E\,\frac{X^{2p}}{(\frac{1}{n}\,X + \frac{n-1}{n})^p} \ \geq \
2^{-p}\,n^p \ \E X^p\, 1_{\{X \geq n\}},
\ene
where we applied Jensen's inequality.
In view of \eqref{4.2}, for the boundedness of the sequence $\E T_n^p$
it is therefore necessary that
\be \label{3.3}
\E X^p\, 1_{\{X \geq n\}} = o(1/n)
\en
as $n \rightarrow \infty$. In particular, the moment $\E X^p$ has to be finite.

The relation \eqref{3.3} may be simplified in terms of the tails
of the distribution of $X$. Indeed,
$$
\E\,X^p\,1_{\{X \geq n\}} \, \geq \, n^p\,\P\{X \geq n\},
$$
so that the property
\be \label{3.4}
\P\{X \geq n\} = o(1/n^{p+1}) \qquad (n \rightarrow \infty)
\en
is necessary for \eqref{3.3}. On the other hand, from \eqref{3.4} it follows that
$\ep_{p+1}(x) \rightarrow 0$ as $x \rightarrow \infty$.
Hence, assuming without loss of generality that $x = n$ is the point of 
continuity of $F$, we get
\bee
\E\, X^p\,1_{\{X > n\}}
 & = &
\int_n^\infty x^p\,dF(x) \ = \
n^p\,(1 - F(n)) + p\,\int_n^\infty x^{p-1}\,(1-F(x))\,dx \\
 & = &
o(1/n) + p\,\int_n^\infty \frac{\ep(x)}{x^2}\,dx \ \leq \ o(1/n) + 
p\,\sup_{x > n} \, \ep(x)\,\frac{1}{n} \ = \ o(1/n).
\ene
Thus, the tail condition \eqref{3.4} is necessary for the convergence 
$\E T_n^p \rightarrow (\E X^2)^p$ as stated in Proposition \ref{proposition:1.2}
(and actually for the boundedness of the $p$-th moments of $T_n$).

\vskip10mm
}

%

In the following lemmas, without loss of generality, let $\E X = 1$.

\begin{lemma} \label{lemma:3.1}
For the boundedness of the sequence $\E T_n^p$ it is 
necessary that the moment $\E X^p$ be finite. Moreover, for the particular 
collection $\gamma = (p)$ with $r=1$, we have 
\be \label{short}
\E \xi_n(\gamma) \, \geq \, 2^{-p}\,n^p \ \E X^p\, 1_{\{X \geq n\}}.
\en
\end{lemma}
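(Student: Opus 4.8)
The plan is to prove the displayed inequality \eqref{short} first and then read off the necessity of $\E X^p < \infty$ as an immediate corollary. The starting observation is that every summand in the representation \eqref{3.2} is non-negative, so $\E T_n^p$ dominates the single term coming from $r = 1$, $\gamma = (p)$, namely $n^{1-p}\,\E \xi_n(\gamma)$, where $\xi_n(\gamma) = X_1^{2p}/(\frac1n X_1 + \frac1n S_{n,1})^p$ and $S_{n,1} = X_2 + \dots + X_n$. Thus it suffices to bound this one term from below.

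To do so, I would condition on $X_1$ and exploit the independence of $X_1$ and $S_{n,1}$. For a frozen value of $X_1$, the map $s \mapsto (\frac1n X_1 + \frac1n s)^{-p}$ is convex on $[0,\infty)$, so Jensen's inequality applied to the average over $S_{n,1}$ lets me replace $S_{n,1}$ by its mean $\E S_{n,1} = (n-1)\,\E X = n-1$ (recall the normalization $\E X = 1$). This gives
\[
\E \xi_n(\gamma) \, \geq \, \E\,\frac{X^{2p}}{(\frac1n X + \frac{n-1}{n})^p}.
\]
Since the integrand is non-negative, I may restrict the expectation to the event $\{X \geq n\}$, on which $\frac{n-1}{n} \leq 1 \leq \frac{X}{n}$ forces the crude bound $\frac1n X + \frac{n-1}{n} \leq \frac{2X}{n}$. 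Substituting this into the denominator produces the factor $2^{-p} n^p$ together with a surviving power $X^{2p}/X^p = X^p$, which is precisely \eqref{short}.

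The necessity part then follows by combining \eqref{short} with the chain $\E T_n^p \geq n^{1-p}\,\E \xi_n(\gamma) \geq 2^{-p}\,n\,\E X^p\, 1_{\{X \geq n\}}$. If $\sup_n \E T_n^p =: C < \infty$, this yields $\E X^p\, 1_{\{X \geq n\}} \leq 2^p C/n$ for every admissible $n$; fixing any single such $n$ shows $\E X^p\, 1_{\{X \geq n\}} < \infty$, and since $\E X^p\, 1_{\{X < n\}} \leq n^p < \infty$ holds trivially, we conclude $\E X^p < \infty$.

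I expect the only delicate point to be the Jensen step: one must check convexity in the correct variable and justify averaging over $S_{n,1}$ with $X_1$ held fixed, which is where the independence of $X_1$ and $S_{n,1}$ enters. Once that step is in place, the tail truncation on $\{X \geq n\}$ and the accompanying bound $\frac1n X + \frac{n-1}{n} \leq \frac{2X}{n}$ are entirely elementary.
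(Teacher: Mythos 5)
Your proposal is correct and follows essentially the same route as the paper: Jensen's inequality applied conditionally on $X_1$ (using convexity of $s \mapsto (\frac{1}{n}X_1 + \frac{1}{n}s)^{-p}$ and $\E S_{n,1} = n-1$ under $\E X = 1$), followed by the truncation to $\{X \geq n\}$ where $\frac{1}{n}X + \frac{n-1}{n} \leq \frac{2X}{n}$. Your explicit deduction of the necessity of $\E X^p < \infty$ from \eqref{short} via the non-negativity of the terms in \eqref{3.2} is a correct spelling-out of what the paper leaves implicit.
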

\begin{proof}
Since
$\xi_n(\gamma) = {X_1^{2p}}/{(\frac{1}{n}\,X_1 + \frac{1}{n}\, S_{n,1})^p}$,
applying Jensen's inequality, we get
\bee
\E \xi_n(\gamma) 
 & \geq &
\E_{X_1}\,\frac{X_1^{2p}}{(\frac{1}{n}\,X_1 + \frac{1}{n}\,\E_{S_{n,1}}\, S_{n,1})^p}\\
 & = &
\E\,\frac{X^{2p}}{(\frac{1}{n}\,X + \frac{n-1}{n})^p} \ \geq \
2^{-p}\,n^p \ \E X^p\, 1_{\{X \geq n\}}.
\ene
\end{proof}
In the sequel, we use the events
\be \label{AB}
A_{n,r} = \Big\{S_{n,r} \leq \frac{n-r}{2}\Big\} \quad {\rm and} \quad
B_{n,r} = \Big\{S_{n,r} > \frac{n-r}{2}\Big\}. 
\en
By Lemma \ref{lemma:1.3}, whenever $n \geq 2p$, for some constant $c>0$ independent of $n$:
\be \label{5.2}
\P(A_{n,r}) \leq e^{-c(n-r)} \leq e^{-cn/2}.
\en
\begin{lemma} \label{lemma:3.2}
If $\E X^p$ is finite, then
$\E \xi_n \rightarrow (\E X^2)^p$ as $n \rightarrow \infty$, where
\be \label{xi_n}
\xi_n \, = \, {X_1^2 \dots 
X_p^2}/{(\frac{1}{n}\,S_p + \frac{1}{n}\, S_{n,p})^p}.
\en
\end{lemma}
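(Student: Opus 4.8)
The plan is to combine the almost sure limit provided by the law of large numbers with a dominated-convergence argument, splitting the expectation according to the events $A_{n,p}$ and $B_{n,p}$ of \eqref{AB}. First note that $\frac1n\,S_p + \frac1n\,S_{n,p} = \frac1n\,S_n$ with $S_n = X_1 + \dots + X_n$, so that $\xi_n = X_1^2 \cdots X_p^2/(S_n/n)^p$. Since $\E X = 1$, the strong law gives $S_n/n \to 1$ a.s., whence $\xi_n \to X_1^2 \cdots X_p^2$ a.s. Moreover, because $p \geq 2$, finiteness of $\E X^p$ forces $\E X^2 < \infty$, so the pointwise limit is integrable with $\E[X_1^2 \cdots X_p^2] = (\E X^2)^p$. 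It remains to justify passing to the limit under the expectation sign.

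On the event $B_{n,p} = \{S_{n,p} > (n-p)/2\}$ and for $n \geq 2p$ one has $S_n \geq S_{n,p} > (n-p)/2 \geq n/4$, hence $(S_n/n)^p > 4^{-p}$ and therefore $\xi_n\, 1_{B_{n,p}} \leq 4^p\, X_1^2 \cdots X_p^2$. The majorant is a fixed integrable function not depending on $n$. Since $\P(A_{n,p}) \leq e^{-cn/2}$ is summable by \eqref{5.2}, the Borel--Cantelli lemma gives $1_{B_{n,p}} \to 1$ a.s.; combined with $\xi_n \to X_1^2 \cdots X_p^2$ a.s., dominated convergence yields $\E[\xi_n\, 1_{B_{n,p}}] \to (\E X^2)^p$.

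The delicate part is the complementary event $A_{n,p}$, where the denominator $S_n/n$ may be small and $\xi_n$ correspondingly large. Here I would use the crude deterministic bound $\xi_n \leq n^p\, X_1^2 \cdots X_p^2/S_p^p \leq n^p\, X_1 \cdots X_p$, obtained by dropping $S_{n,p} \geq 0$ in the denominator and using $S_p \geq X_i$ termwise. The decisive observation is that the factor $X_1 \cdots X_p$ depends only on $X_1,\dots,X_p$, whereas $A_{n,p}$ depends only on $S_{n,p} = X_{p+1} + \dots + X_n$; by independence and $\E X = 1$,
\be
\E[\xi_n\, 1_{A_{n,p}}] \ \leq \ n^p\, \E[X_1 \cdots X_p]\, \P(A_{n,p}) \ = \ n^p\, \P(A_{n,p}) \ \leq \ n^p\, e^{-cn/2} \ \to \ 0 . \nonumber
\en
Adding the two contributions, $\E\xi_n = \E[\xi_n\, 1_{A_{n,p}}] + \E[\xi_n\, 1_{B_{n,p}}] \to (\E X^2)^p$, as claimed. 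The main obstacle is precisely this control of the small-denominator event: the naive bound loses a factor $n^p$, and it is the exponential smallness of $\P(A_{n,p})$ from Lemma \ref{lemma:1.3}, together with the independence splitting of the numerator from $A_{n,p}$, that rescues the estimate.
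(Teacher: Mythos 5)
Your proof is correct and takes essentially the same route as the paper's: the same split over the events $A_{n,p}$ and $B_{n,p}$, the exponential bound \eqref{5.2} from Lemma \ref{lemma:1.3} (via independence of the numerator from $A_{n,p}$) to absorb the factor $n^p$ on the small-denominator event, and dominated convergence with the integrable majorant $X_1^2\cdots X_p^2$ (up to a constant) on $B_{n,p}$. The only, immaterial, difference is your crude bound $\xi_n \le n^p\,X_1\cdots X_p$ on $A_{n,p}$, where the paper instead uses $X_1^2\cdots X_p^2 \le S_p^{2p}$ to get $\xi_n \le n^p S_p^p$ together with $\E S_p^p < \infty$.
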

\begin{proof}
Using $X_1 \dots X_p \leq S_p^p$, we have
$
\xi_n \, \leq \, 
{S_p^{2p}}/{(\frac{1}{n}\,S_n)^p} \, \leq \,
n^p\,S_p^p.
$
Hence
\be \label{long}
\E\,\xi_n\,1_{A_{n,p}} \, \leq \, 
n^p\ \E\,S_p^p\ \P(A_{n,p}) \, = \,
o(e^{-cn})
\en
for some constant $c>0$ independent of $n$. Here, we applied \eqref{5.2} with
$r=p$ and Lemma \ref{lemma:3.1} which ensures that $\E\,S_p^p < \infty$. Further, 
%
$
\xi_n \,1_{B_{n,p}}
\, \leq \,
2^p X_1^2 \dots X_p^2.
$
Hence, the random variables $\xi_n\,1_{B_{n,p}}$ have an an integrable 
majorant. Since also $\xi_n \rightarrow X_1^2 \dots X_p^2$ (the law of large 
numbers) and $1_{B_{n,p}} \rightarrow 1$ a.s. (implied by \eqref{5.2}), 
one may apply the Lebesgue dominated convergence theorem, which gives 
$
\E \xi_n 1_{B_n} \rightarrow (\E X^2)^p.
$
Together with \eqref{long}, we get $\E \xi_n \rightarrow (\E X^2)^p$.
\end{proof}
\begin{lemma} \label{lemma:4.1}
 If the moment $\E X^p$ is finite, then for any 
$\gamma = (\gamma_1,\dots,\gamma_r) \in \mathcal C(p,r)$,
$$
\E\,\xi_n(\gamma) \, = \, 4^p\,\E\,\frac{X_1^{2\gamma_1} \dots 
X_r^{2\gamma_r}}{(\frac{1}{n}\,S_r + 1)^p} + o(1).
$$
\end{lemma}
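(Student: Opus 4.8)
The plan is to replace the random bulk average $\frac1n S_{n,r}$ in the denominator of $\xi_n(\gamma)=X_1^{2\gamma_1}\cdots X_r^{2\gamma_r}/(\frac1n S_r+\frac1n S_{n,r})^p$ by its almost sure limit $\E X=1$, exploiting that the numerator $X_1^{2\gamma_1}\cdots X_r^{2\gamma_r}$ and the local sum $S_r$ are measurable with respect to $\mathcal F_r=\sigma(X_1,\dots,X_r)$, hence independent of the bulk $S_{n,r}=X_{r+1}+\dots+X_n$. I would therefore condition on $\mathcal F_r$ and integrate the bulk out first, and I would organise the estimate along the events \eqref{AB}, writing $\E\,\xi_n(\gamma)=\E[\xi_n(\gamma)\,1_{A_{n,r}}]+\E[\xi_n(\gamma)\,1_{B_{n,r}}]$ and decomposing the right-hand integrand the same way, so that the two expressions may be matched piece by piece.

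First I would show that $A_{n,r}$ contributes $o(1)$ to both sides. Since $X_i\le S_r$ for $i\le r$ and $\sum_i 2\gamma_i=2p$, one has $X_1^{2\gamma_1}\cdots X_r^{2\gamma_r}\le S_r^{2p}$, while $\frac1n S_r+\frac1n S_{n,r}\ge\frac1n S_r$ gives the crude bound $\xi_n(\gamma)\le n^p S_r^p$; the same bound holds for the right-hand integrand because $\frac1n S_r+1\ge\frac1n S_r$. As $S_r$ is independent of $A_{n,r}$, the exponential estimate \eqref{5.2} gives $\E[\xi_n(\gamma)\,1_{A_{n,r}}]\le n^p\,\E S_r^p\,\P(A_{n,r})\le n^p\,\E S_r^p\,e^{-cn/2}=o(1)$, and likewise for the right-hand side, where $\E S_r^p<\infty$ under the hypothesis $\E X^p<\infty$ and Lemma \ref{lemma:3.1}. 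Moreover the right-hand integrand depends only on $X_1,\dots,X_r$ and is thus independent of $B_{n,r}$, so its restriction to $B_{n,r}$ equals $\P(B_{n,r})$ times the full expectation, with $\P(B_{n,r})\to1$; the whole matter reduces to the behaviour on $B_{n,r}$.

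On $B_{n,r}$ I would freeze $\mathcal F_r$ and examine the bulk integral $\phi_n(a)=\E[(a+\frac1n S_{n,r})^{-p}\,1_{B_{n,r}}]$ as a function of $a=\frac1n S_r$. Two ingredients drive the comparison. The threshold defining $B_{n,r}$ gives $\frac1n S_{n,r}>\frac{n-r}{2n}\ge\frac14$ for $n\ge2r$, whence $a+\frac1n S_{n,r}\ge\frac14(a+1)$ and the pointwise bound $\xi_n(\gamma)\,1_{B_{n,r}}\le 4^p\,X_1^{2\gamma_1}\cdots X_r^{2\gamma_r}/(\frac1n S_r+1)^p$, which is the source both of the upper estimate and of the dominating function for the limit passage. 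The law of large numbers $\frac1n S_{n,r}\to1$ a.s., together with $1_{B_{n,r}}\to1$, identifies the limiting integrand and, via Fatou's lemma applied to $\xi_n(\gamma)\,1_{B_{n,r}}$, yields the matching lower estimate. The multiplicative constant in the displayed identity is then fixed by the common limit of these upper and lower bounds, namely by the almost sure limit of the bulk average $\frac1n S_{n,r}$; combining them gives the asserted equality, provided the estimates can be pushed rigorously to the limit inside the outer $\mathcal F_r$-expectation.

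The step I expect to be the main obstacle is exactly this limit passage, because $X_1^{2\gamma_1}\cdots X_r^{2\gamma_r}$ need not be integrable: already for $\gamma=(p)$ with $r=1$ only $\E X^p<\infty$ is available, whereas a majorant of the form $X_1^{2\gamma_1}\cdots X_r^{2\gamma_r}$ would require $\E X^{2p}$. Hence there is no $n$-independent integrable dominating function, and even the natural majorant $4^p X_1^{2\gamma_1}\cdots X_r^{2\gamma_r}/(\frac1n S_r+1)^p$ depends on $n$. The retained local term $\frac1n S_r$ is what regularises the otherwise divergent numerator — a large $X_i$ enters $\frac1n S_r$ in the denominator and compensates — so the limit must be organised around it: integrate the bulk out first, then truncate at a level $K$, treating $\{S_r\le K\}$ by ordinary dominated convergence (there $a=\frac1n S_r\to0$ and the numerator is bounded by $K^{2p}$) and $\{S_r>K\}$ by a uniform-in-$n$ tail estimate that balances the degree-$2p$ numerator against the $p$-th power of $\frac1n S_r$ using only $\E X^p<\infty$. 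Carrying out this tail control symmetrically, so that the lower bound matches the upper one and the statement is obtained as an identity rather than as a one-sided inequality, is the delicate heart of the argument.
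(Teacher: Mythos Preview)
Your decomposition along $A_{n,r}$ and $B_{n,r}$, the crude bound $\xi_n(\gamma)\le n^p S_r^p$ on $A_{n,r}$, and the pointwise bound $\xi_n(\gamma)\,1_{B_{n,r}}\le 4^p\,X_1^{2\gamma_1}\cdots X_r^{2\gamma_r}/(\frac1n S_r+1)^p$ on $B_{n,r}$ are exactly what the paper does. That part is fine and is the whole content of the paper's proof.

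The gap is that you are trying to prove the displayed relation as a genuine \emph{identity}, and that is impossible: as written, with the factor $4^p$ and an equality sign, the statement is false. Take $r=p$ and $\gamma=(1,\dots,1)$. By Lemma~\ref{lemma:3.2}, $\E\xi_n(\gamma)\to(\E X^2)^p$, while by dominated convergence $\E\big[X_1^2\cdots X_p^2/(\frac1n S_p+1)^p\big]\to(\E X^2)^p$ as well; the asserted equality would force $(\E X^2)^p=4^p(\E X^2)^p+o(1)$, which is absurd. More generally, your own heuristic (``replace $\frac1n S_{n,r}$ by its a.s.\ limit $1$'') produces the right-hand expectation with coefficient $1$, not $4^p$; the $4^p$ arises only from the one-sided comparison $\frac1n S_{n,r}>\frac{n-r}{2n}\ge\frac14$ on $B_{n,r}$ and cannot be matched from below. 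So the ``delicate heart'' you identify --- a symmetric tail control yielding a matching lower bound with the same constant --- does not exist.

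The resolution is that the paper only needs, and its proof only establishes, the one-sided inequality
\[
\E\,\xi_n(\gamma)\ \le\ 4^p\,\E\,\frac{X_1^{2\gamma_1}\cdots X_r^{2\gamma_r}}{(\frac1n S_r+1)^p}\ +\ o(1),
\]
obtained by combining \eqref{4.1} and \eqref{4.1.1}. Every subsequent use of Lemma~\ref{lemma:4.1} (in \eqref{5.4}, \eqref{5.5}, and in the proof of Theorem~\ref{theorem1}) invokes it only as an upper bound. Drop the lower-bound programme; your upper-bound argument already reproduces the paper's proof.
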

\begin{proof}
 Using an elementary bound
$
X_1^{2\gamma_1} \dots X_r^{2\gamma_r} \leq 
(X_1 + \dots + X_r)^{2\gamma_1 + \dots + 2\gamma_r} = S_r^{2p}
$
and applying Jensen's inequality, we see that
$
\xi_n(\gamma) \leq n^p\, S_r^p \leq n^p\,r^{p-1}\,(X_1^p + \dots + X_r^p).
$
Hence
\be \label{4.1}
\E \xi_n(\gamma)\,1_{A_{n,r}} \leq
n^p\,r^{p-1} \sum_{k=1}^r \E X_k^p\,1_{A_{n,r}} = 
n^{p}\,r^{p}\, \E X^p\,\P(A_{n,r}) \ = \ o(e^{-c' n}).
\en

On the other hand, on the set $B_{n,r}$ there is a pointwise bound
\be \label{4.1.1}
\xi_n(\gamma) \,1_{B_{n,r}} \leq \, \frac{X_1^{2\gamma_1} \dots 
X_r^{2\gamma_r}}{(\frac{1}{n}\,S_r + \frac{n-r}{2n})^p} \, \leq \, 4^p\,
\frac{X_1^{2\gamma_1} \dots 
X_r^{2\gamma_r}}{(\frac{1}{n}\,S_r + 1)^p}.
\en
\end{proof} 

Our task is reduced to the estimation of the expectation
on the right-hand side of \eqref{4.1.1}. Let us first consider the
{shortest collection $\gamma = (p)$ of length $r=1$.

\vskip5mm
\begin{lemma} \label{lemma:4.2}
 Under the condition \eqref{3.4},
\be \label{4.2}
\E\, \frac{X_1^{2p}}{(\frac{1}{n}\,X_1 + 1)^p} = o(n^{p-1}).
\en
In addition, if $\P\{X \geq x\} = O(x^{-q})$ for some real value $q$ in
the interval $p < q < 2p$, then
\be \label{4.3}
\E\, \frac{X_1^{2p}}{(\frac{1}{n}\,X_1 + 1)^p} = O(n^{2p-q}).
\en
\end{lemma}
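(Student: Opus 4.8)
The plan is to estimate the expectation $\E\, \frac{X_1^{2p}}{(\frac{1}{n}\,X_1 + 1)^p}$ by splitting the range of integration at the natural scale $x \sim n$, where the denominator transitions from being essentially $1$ to being dominated by the $\frac{1}{n} X_1$ term. First I would write the expectation as a Stieltjes integral $\int_0^\infty \frac{x^{2p}}{(\frac{x}{n}+1)^p}\,dF(x)$ and integrate by parts, or equivalently pass to the tail $\bar F(x) = 1 - F(x)$. On the region $x \leq n$ the denominator is bounded below by $1$ and above by $2^p$, so the integrand is comparable to $x^{2p}\bar F$-weighted mass; on the region $x > n$ the denominator is of order $(x/n)^p$, so the integrand behaves like $n^p x^p$.

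\medskip
For \eqref{4.2} under the tail condition \eqref{3.4}, namely $\bar F(x) = o(x^{-p-1})$, I expect the decisive contribution to come from the upper range $x > n$. There the integrand is of order $n^p\,x^p$, and integrating $x^p\,dF(x)$ against a tail of order $o(x^{-p-1})$ yields $\int_n^\infty x^p\,dF(x) = o(1/n)$ by the same computation already carried out in the commented derivation of \eqref{3.3}--\eqref{3.4} (boundary term $n^p \bar F(n) = o(1/n)$ plus $p\int_n^\infty x^{p-1}\bar F\,dx$, the latter dominated by $\sup_{x>n}\ep_{p+1}(x)\cdot \frac{1}{n} = o(1/n)$). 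Multiplying by the $n^p$ prefactor gives $o(n^{p-1})$. The lower range $x \leq n$ contributes at most $2^p\,\E X^{2p}\,1_{\{X\leq n\}}$ restricted appropriately, and one checks this is also $o(n^{p-1})$ using that $\E X^p < \infty$ and the decay of the tail; truncating $x^{2p} = x^p \cdot x^p \leq n^p x^p$ on $x \leq n$ reduces it to the same tail integral as above.

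\medskip
For \eqref{4.3}, under the stronger polynomial tail $\bar F(x) = O(x^{-q})$ with $p < q < 2p$, I would redo the two-region estimate keeping track of powers of $n$. On $x > n$ the integrand $\sim n^p x^p$ integrated against $dF$ with $\bar F = O(x^{-q})$ gives, after integration by parts, a boundary term $n^p\cdot n^p\cdot\bar F(n) = O(n^{2p-q})$ and an integral $n^p\int_n^\infty x^{p-1}\,O(x^{-q})\,dx = O(n^p \cdot n^{p-q}) = O(n^{2p-q})$, where convergence of the integral at infinity uses $q > p$. On $x \leq n$ the denominator is $\Theta(1)$, so the contribution is $O\!\big(\int_0^n x^{2p}\,dF(x)\big)$; integrating by parts this is $O(n^{2p}\bar F(n)) + O\!\big(\int_0^n x^{2p-1}\bar F(x)\,dx\big) = O(n^{2p-q})$, the divergence of the integral at the upper end being controlled precisely because $2p - q > 0$. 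Both regions thus match the claimed order $O(n^{2p-q})$.

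\medskip
The main obstacle I anticipate is bookkeeping the boundary terms and the directions of the integral divergences so that the two regimes glue to the single exponent $2p-q$: on the lower range the integral $\int x^{2p-1}\bar F\,dx$ diverges at the top (handled by $2p-q>0$), whereas on the upper range the integral $\int x^{p-1}\bar F\,dx$ converges at infinity (handled by $q>p$), and it is the interplay of these two one-sided conditions that forces the restriction $p < q < 2p$ and produces a clean bound rather than extra logarithmic factors. Getting the integration-by-parts boundary contributions at $x=n$ to cancel or combine correctly, rather than producing a spurious larger term, is the delicate point.
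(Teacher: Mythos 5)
Your overall scheme --- splitting at $x = n$, bounding the denominator by $1$ below that threshold and by $(x/n)^p$ above it, then integrating by parts against the tail $1-F$ --- is exactly the paper's proof, and your treatment of the upper range in \eqref{4.2} and of both ranges in \eqref{4.3} reproduces it correctly. One reassurance on the point you flagged as delicate: no cancellation of boundary terms is needed. The boundary term at $x=n$ from $\int_0^n x^{2p}\,dF$ enters as $-n^{2p}(1-F(n)) \leq 0$ and is simply discarded, while the one from $\int_n^\infty x^p\,dF$ is $n^p\,(1-F(n))$, which is $o(1/n)$ under \eqref{3.4}, respectively $O(n^{p-q})$ under the $O(x^{-q})$ tail, directly from the hypothesis.

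There is, however, a genuine flaw in your lower-range argument for \eqref{4.2}. You propose to handle $\E\, X^{2p}\,1_{\{X \leq n\}}$ by truncating $x^{2p} = x^p \cdot x^p \leq n^p x^p$ on $\{x \leq n\}$, claiming this ``reduces it to the same tail integral as above.'' It does not: it reduces it to $n^p\, \E\, X^p\, 1_{\{X \leq n\}} \leq n^p\, \E X^p = O(n^p)$, because $\int_0^n x^p\,dF$ is not a tail integral --- it converges to the full finite moment $\E X^p$ --- and so this bound misses the required $o(n^{p-1})$ by more than a factor of $n$. No pointwise comparison of this kind can work; the estimate must exploit the tail decay throughout $[0,n]$, not merely the finiteness of $\E X^p$. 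The repair is precisely the integration by parts you already carry out in the \eqref{4.3} case: with $\ep_{p+1}(x) = x^{p+1}(1-F(x))$, condition \eqref{3.4} gives $\ep_{p+1}(x) \rightarrow 0$, and
$$
\E\, X^{2p}\,1_{\{X < n\}} \, \leq \, 2p \int_0^n x^{2p-1}\,(1-F(x))\,dx \, = \, 2p \int_0^n x^{p-2}\,\ep_{p+1}(x)\,dx \, = \, o(n^{p-1}),
$$
where the last step splits the integral at a large fixed $A$, uses $p \geq 2$ so that $\int_A^n x^{p-2}\,dx = O(n^{p-1})$, and lets $\sup_{x \geq A} \ep_{p+1}(x)$ be arbitrarily small. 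With this substitution your argument coincides with the paper's proof.
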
 
\vskip2mm
\begin{proof}
We have
\begin{eqnarray} 
\label{L3-1}
\E\, \frac{X_1^{2p}}{(\frac{1}{n}\,X_1 + 1)^p} 
 & = &
\E\, \frac{X_1^{2p}}{(\frac{1}{n}\,X_1 + 1)^p}\,1_{\{X_1 \geq n\}} +
\E\, \frac{X_1^{2p}}{(\frac{1}{n}\,X_1 + 1)^p}\,1_{\{X_1 < n\}} \nonumber\\
 & \leq &
n^p\, \E\,X^p\,1_{\{X \geq n\}} + \E\, X^{2p}\,1_{\{X < n\}}.
\nonumber
\end{eqnarray}

In view of \eqref{3.4}, to derive \eqref{4.2}, it remains to bound the last expectation 
by $o(n^{p-1})$. Integrating by parts and assuming that
$x = n$ is the point of continuity of $F(x)$, we have using $\ep_{p+1}(x) \rightarrow 0$ as $x \rightarrow \infty$,
\begin{eqnarray} 
\label{by_part}
\E\, X^{2p}\,1_{\{X < n\}}
 & = &
-n^{2p}\,(1 - F(n)) + 2p \int_0^n x^{2p-1}\,(1-F(x))\,dx \nonumber\\
 & \leq &
2p \int_0^n x^{p-2}\,\ep_{p+1}(x)\,dx \ = \ o(n^{p-1}),
\end{eqnarray}

For the second assertion \eqref{4.3}, we similarly have
\bee
\E\, X^{2p}\,1_{\{X < n\}} \, &\leq &\,
2p \int_0^n x^{2p-1-q}\,\ep_q(x)\,dx \ = \ O(n^{2p-q}),\\
\E\, X^p\,1_{\{X \geq n\}}
&= &
O(n^{p-q}) + p \int_n^\infty x^{p-q-1}\,\ep_q(x)\,dx \ = \ O(n^{p-q}).
\ene
\long\gdef\COMMENT#106{
It follows From \eqref{3.4} that
$\ep_{p+1}(x) \rightarrow 0$ as $x \rightarrow \infty$.
Hence, assuming without loss of generality that $x = n$ is the point of 
continuity of $F$, we get
\begin{eqnarray} \label{tail}
\E\, X^p\,1_{\{X > n\}}
 & = &
\int_n^\infty x^p\,dF(x) \ = \
n^p\,(1 - F(n)) + p\,\int_n^\infty x^{p-1}\,(1-F(x))\,dx 
\nonumber\\
 & = &
o(1/n) + p\,\int_n^\infty \frac{\ep(x)}{x^2}\,dx \\ 
& = &
o(1/n) + 
p\,\sup_{x > n} \, \ep(x)\,\frac{1}{n} \ = \ o(1/n).\nonumber
\end{eqnarray}

In view of \eqref{tail}, to derive \eqref{4.2}, it remains to bound the last expectation in \eqref{L3-1} 
by $o(n^{p-1})$. Integrating by parts and assuming that
$x = n$ is the point of continuity of $F(x)$, we have
\begin{eqnarray} 
\label{by_part}
\E\, X^{2p}\,1_{\{X < n\}}
 & = &
\int_0^n x^{2p}\,dF(x) 
=
-n^{2p}\,(1 - F(n)) + 2p \int_0^n x^{2p-1}\,(1-F(x))\,dx \nonumber\\
 & \leq &
2p \int_0^n x^{p-2}\,\ep_{p+1}(x)\,dx \ = \ o(n^{p-1}),
\end{eqnarray}
where we used that $\ep_{p+1}(x) \rightarrow 0$ as $x \rightarrow \infty$.

For the second assertion \eqref{4.3}, we similarly have
$$
\E\, X^{2p}\,1_{\{X < n\}} \, \leq \,
2p \int_0^n x^{2p-1-q}\,\ep_q(x)\,dx \ = \ O(n^{2p-q}).
$$
In addition,
\bee
\E\, X^p\,1_{\{X \geq n\}}
 & = &
n^p\,(1 - F(n)) + p \int_n^\infty x^{p-1}\,(1-F(x))\,dx \\
 & = &
O(n^{p-q}) + p \int_n^\infty x^{p-q-1}\,\ep_q(x)\,dx \ = \ O(n^{p-q}).
\ene
}
\end{proof}

\begin{lemma} \label{lemma:4.3}
Let $\gamma = (\gamma_1,\dots,\gamma_r) \in \mathcal C(p,r)$,
$2 \leq r \leq p-1$. Under \eqref{3.4}, we have
\be \label{4.4}
\E\,\frac{X_1^{2\gamma_1} \dots X_r^{2\gamma_r}}{(\frac{1}{n}\,S_r + 1)^p}
 = o(n^{p-r-1} \log n).
\en
\end{lemma}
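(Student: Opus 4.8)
The plan is to decouple the $r$ variables appearing in the denominator and thereby reduce the bound to one-dimensional estimates of the type already established in Lemma~\ref{lemma:4.2}. Since $\gamma_1 + \dots + \gamma_r = p$, I would split the power of the denominator and use $S_r \geq X_i$ for each $i$ to write
$$
\Big(\tfrac1n\,S_r + 1\Big)^p = \prod_{i=1}^r \Big(\tfrac1n\,S_r + 1\Big)^{\gamma_i}
\ \geq\ \prod_{i=1}^r \Big(\tfrac1n\,X_i + 1\Big)^{\gamma_i}.
$$
The integrand then factorizes as
$$
\frac{X_1^{2\gamma_1}\dots X_r^{2\gamma_r}}{(\frac1n S_r + 1)^p}
\ \leq\ \prod_{i=1}^r \frac{X_i^{2\gamma_i}}{(\frac1n X_i + 1)^{\gamma_i}},
$$
and, since the $X_i$ are independent, taking expectations gives
$$
\E\,\frac{X_1^{2\gamma_1}\dots X_r^{2\gamma_r}}{(\frac1n S_r + 1)^p}
\ \leq\ \prod_{i=1}^r g(\gamma_i), \qquad
g(m) := \E\,\frac{X^{2m}}{(\frac1n X + 1)^m}.
$$

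Next I would estimate the single-variable quantity $g(m)$ for an integer $m \geq 1$, splitting the expectation at the level $X = n$ exactly as in Lemma~\ref{lemma:4.2}. On $\{X < n\}$ one has $(\frac1n X + 1)^m \geq 1$, so this part is at most $\E\,X^{2m}\,1_{\{X < n\}}$; on $\{X \geq n\}$ one has $(\frac1n X + 1)^m \geq (X/n)^m$, so this part is at most $n^m\,\E\,X^m\,1_{\{X \geq n\}}$. Integrating by parts and feeding in the tail condition \eqref{3.4} (which forces $\ep_{p+1}(x) \to 0$), I expect the three regimes
$$
g(m) \ =\
\begin{cases}
o(n^{2m-p-1}), & 2m > p+1,\\
o(\log n), & 2m = p+1,\\
O(1), & 2m < p+1,
\end{cases}
$$
consistent with Lemma~\ref{lemma:4.2}, which is precisely the case $r=1$, $m=p$.

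It then remains to multiply these factors. The decisive elementary observation is that at most one index $i^\ast$ can have $2\gamma_{i^\ast} \geq p+1$: two such indices would force $\gamma_{i^\ast} + \gamma_{j^\ast} \geq p+1 > p = \sum_i \gamma_i$. Hence every factor except possibly one is $O(1)$, and the product is governed by the single large factor. Because the other $r-1$ exponents are at least $1$, we have $\gamma_{i^\ast} \leq p - r + 1$, whence
$$
2\gamma_{i^\ast} - p - 1 \ \leq\ p - 2r + 1 \ \leq\ p - r - 1 \qquad (r \geq 2).
$$
Thus the product is $o(n^{p-r-1})$ in the strict case and $o(\log n)$ in the borderline case $2\gamma_{i^\ast} = p+1$; since $p - r - 1 \geq 0$ for $r \leq p-1$, both are $o(n^{p-r-1}\log n)$, which is \eqref{4.4}. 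This borderline case (e.g. $p=3$, $r=2$) is exactly what forces the logarithmic factor.

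The factorization and the combinatorial count are short; the main obstacle I anticipate is the careful bookkeeping for $g(m)$, namely correctly isolating the borderline $2m = p+1$ that yields $o(\log n)$, and checking that under the bare $o(\cdot)$ tail hypothesis \eqref{3.4} (as opposed to an exact power law) the integrals $\int_0^n x^{2m-p-2}\,\ep_{p+1}(x)\,dx$ really produce the claimed $o$-rates rather than merely $O$-rates. The constraint $2 \leq r \leq p-1$ enters precisely here: $r \geq 2$ gives the last inequality above, and $r \leq p-1$ keeps the target exponent $p-r-1$ nonnegative.
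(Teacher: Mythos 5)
Your proof is correct, and it follows a genuinely different route from the paper's. The paper attacks the coupled denominator head-on: it isolates the largest exponent $\gamma_1 > p/2$, splits on $\{S_r < n\}$ versus $\{S_r \geq n\}$, and on the latter introduces the maximal-coordinate events $\Omega_i = \{X_i \geq \max_{j \neq i} X_j\}$ (so that $X_i \leq S_r \leq rX_i$), reducing the problem to the two-variable tail estimates \eqref{4.8}, which are then handled by a case analysis ($i \geq 2$ versus $i = 1$) with iterated integrations by parts. Your decoupling observation — writing $(\frac{1}{n}S_r+1)^p = \prod_{i=1}^r (\frac{1}{n}S_r+1)^{\gamma_i} \geq \prod_{i=1}^r (\frac{1}{n}X_i+1)^{\gamma_i}$ and invoking independence to get $\E\,\xi \leq \prod_i g(\gamma_i)$ with $g(m) = \E\,X^{2m}(\frac{1}{n}X+1)^{-m}$ — collapses all of this into one-dimensional estimates of exactly the type of Lemma \ref{lemma:4.2}, and your trichotomy for $g(m)$ (namely $o(n^{2m-p-1})$ for $2m>p+1$, $o(\log n)$ for $2m=p+1$, $O(1)$ for $2m<p+1$, the last using $2m \leq p$ and $\E X^p < \infty$ under \eqref{3.4}) checks out, including the conversion of the bare $o(\cdot)$ tail hypothesis into $o$-rates for $\int_0^n x^{2m-p-2}\ep_{p+1}(x)\,dx$. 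The combinatorial closing step is also sound: at most one index can have $2\gamma_{i^*} \geq p+1$, and $\gamma_{i^*} \leq p-r+1$ gives $2\gamma_{i^*}-p-1 \leq p-2r+1 \leq p-r-1$ precisely when $r \geq 2$, with $r \leq p-1$ guaranteeing $p-r-1 \geq 0$ so the $O(1)$ and $o(\log n)$ cases are absorbed into $o(n^{p-r-1}\log n)$; your worst case $p=3$, $r=2$, $\gamma=(2,1)$ reproduces the paper's remark on the unremovable logarithm. What each approach buys: yours is shorter, eliminates the $\Omega_i$ events and the two-case analysis entirely, and makes the source of the rate transparent (a single "heavy" exponent); the paper's event-splitting machinery, while heavier here, is the template it reuses almost verbatim for the $(p+1)$-power analog in Lemma \ref{lemma:7.4}, where the same intermediate bounds \eqref{7.8}--\eqref{7.10} are needed — though your factorization would adapt there as well, e.g.\ via $(\frac{1}{n}S_r+1)^{p+1} \geq \prod_i (\frac{1}{n}X_i+1)^{\gamma_i'}$ for a suitable composition $\gamma'$ of $p+1$.
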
 
\begin{proof}
If all $\gamma_i \leq {p}/{2}$, there is nothing to prove,
since then
$$
\E\,\frac{X_1^{2\gamma_1} \dots X_r^{2\gamma_r}}{(\frac{1}{n}\,S_r + 1)^p}
\, \leq \, \E\,X_1^{2\gamma_1} \dots \E\,X_r^{2\gamma_r} \, \leq \,
(\E X^p)^r.
$$

In the other case, suppose for definiteness that $\gamma_1$ is the largest 
number among all $\gamma_i$'s. Necessarily $\gamma_1 >  {p}/{2}$ and
$\gamma_i < {p}/{2}$ for all $i \geq 2$. Since $S_r < n$ implies
$X_1 < n$, we similarly have
$$
\E\,\frac{X_1^{2\gamma_1} \dots X_r^{2\gamma_r}}{(\frac{1}{n}\,S_r + 1)^p}\
1_{\{S_r < n\}} \, \leq \, (\E X^p)^{r-1}\,
\E\,X^{2\gamma_1}\,1_{\{X < n\}}.
$$
To bound the last expectation, note that
$r - 1 \leq \gamma_2 + \dots + \gamma_r <  {p}/{2}$, so that $p \geq 2r-1$. 
Hence, if $x = n$ is the point of continuity of $F(x)$, similarly to \eqref{by_part} we get
\be  \label{4.5}
\E\, X^{2\gamma_1}\,1_{\{X < n\}}
 \leq
2\gamma_1 \int_0^n x^{2\gamma_1 - p -2}\,\ep_{p+1}(x)\,dx.
\en
But since $\gamma_1 \leq p-r+1$,
\be \label{4.6}
\int_1^n x^{2\gamma_1 - p -2}\,\ep_{p+1}(x)\,dx \, \leq \, 
\int_1^n x^{p - 2r}\,\ep_{p+1}(x)\,dx \, = \, o(n^{p - 2r + 1}), 
\en
if  $p \geq 2r$ or $p \leq 2r - 2,$
which is even stronger than the rate $o(n^{p-r-1})$.
In the remaining case $p = 2r-1$, the last integral is $o(\log n)$.
This proves \eqref{4.4} for the part of the expectation restricted to the set $S_r<n$,
that is,
\be \label{4.7}
\E\,\frac{X_1^{2\gamma_1} \dots X_r^{2\gamma_r}}{(\frac{1}{n}\,S_r + 1)^p}\ 
1_{\{S_r < n\}} \, = \, o(n^{p-r-1} \log n).
\en
Note that the logarithmic term cannot be removed in the special situation where 
$p=3$, $r=2$, $\gamma_1 = 2$, $\gamma_2 = 1$, in which case the last integral 
in \eqref{4.6} becomes $\int_1^n x^{-1}\,\ep_4(x)\,dx$.

Turning to the expectation over the complementary set $S_r \geq n$,
introduce the events 
$$
\Omega_i = \Big\{X_i \geq \max_{j \neq i} X_j\Big\}, \quad i = 1,\dots,r.
$$
On every such set, $X_i \leq S_r \leq rX_i$. In particular,
$S_r \geq n$ implies $X_i \geq n/r$. Hence, together with \eqref{4.7}, 
\eqref{4.4} would follow from the stronger assertion
\be\label{4.8}
\E\,\frac{X_1^{2\gamma_1} \dots X_r^{2\gamma_r}}{X_i^p}\,
1_{\{X_i \geq n\} \cap \Omega_i} \, = \, o(n^{-r-1})
\en
with an arbitrary index $1 \leq i \leq r$.

{\it Case 1.}
$i \geq 2$. If we fix any values $X_1 = x_1$ and $X_i = x_i$,
then the expectation with respect to $X_j$, $j \neq i$, in \eqref{4.8} will yield 
a bounded quantity (since the $p$-moment is finite). Hence \eqref{4.8} is simplified to
\be \label{4.9}
\E\,X_1^{2\gamma_1} X_i^{2\gamma_i - p}\,
1_{\{X_i \geq n\} \cap \{X_i \geq X_1\}} \, = \, o(n^{-r-1}).
\en
Here, the expectation over $X_1$ may be carried out and estimated similarly to 
\eqref{4.5}, by replacing $n$ with $x_i$. Namely,
$$
\E\,X_1^{2\gamma_1} 1_{\{X_1 \leq x_i\}} \, \leq \, 
2\gamma_1 \int_0^{x_i} x^{2\gamma_1 - p - 2}\,\ep_{p+1}(x)\,dx \, = \, 
\delta(x_i)\,x_i^{2\gamma_1 - p}
$$
with some $\delta(x_i) \rightarrow 0$ as $x_i \rightarrow \infty$
(this assertion may be strengthened when $2\gamma_1 - p = 1$).
Hence, the expectation in \eqref{4.9} is bounded by
\bee 
\E\,X_i^{2\gamma_i + 2\gamma_1 - 2p}\,\delta(X_i)\,1_{\{X_i \geq n\}}
 & \leq &
\delta_n \ \E\,X_i^{2\gamma_i + 2\gamma_1 - 2p}\,1_{\{X_i \geq n\}} \\
 & & \hskip-30mm = \ 
\delta_n\,n^{2\gamma_i + 2\gamma_1 - 2p}\,(1-F(n)) +
c_i \delta_n \int_n^\infty x^{2\gamma_i + 2\gamma_1 - 2p - 1}\,(1-F(x))\,dx \\
 & & \hskip-30mm = \ 
o(n^{2\gamma_i + 2\gamma_1 - 3p - 1}) +
c_i \delta_n \int_n^\infty x^{2\gamma_i + 2\gamma_1 - 3p - 2}\,\ep_{p+1}(x)\,dx \\
 & & \hskip-30mm = \ 
o(n^{2\gamma_i + 2\gamma_1 - 3p - 1}),
\ene
where $\delta_n = \sup_{x \geq n} \delta(x) \rightarrow 0$.
To obtain \eqref{4.9}, it remains to check that
$2\gamma_i + 2\gamma_1 - 3p - 1 \leq -r-1$. And indeed, since
$
p = \gamma_i + \gamma_1 + \sum_{j \neq i,1} \gamma_j \geq \gamma_i + \gamma_1 + (r-2),
$
the desired relation would follow from
$2\,(p - (r-2)) - 3p - 1 \leq -r-1$, that is, $p+r \geq 4$ (which is true).

{\it Case 2.}
$i = 1$. If we fix any value $X_1 = x_1$,
the expectation with respect to $X_j$, $j \neq 1$, will yield a bounded
quantity (since the $p$-moment is finite). Hence \eqref{4.8} is simplified to
\be \label{4.10}
\E\,X^{2\gamma_1 - p}\, 1_{\{X \geq n\}} = o(n^{-r-1}).\nonumber
\en
Here, the expectation may be estimated similarly.
Namely,
\bee
\E\,X^{2\gamma_1 - p}\, 1_{\{X \geq n\}} 
 & = &
\int_n^\infty x^{2\gamma_1 - p}\,dF(x) \\
 & = &
o(n^{2\gamma_1 - 2p-1}) +
\int_n^\infty x^{2\gamma_1 - 2p - 2}\,\ep_{p+1}(x)\,dx \ = \
o(n^{2\gamma_1 - 2p - 1}).
\ene
It remains to see that $2\gamma_1 - 2p - 1 \leq -r-1$.
Again, since $\gamma_1 \leq p - (r-1)$, the latter would follow from
$2(p - r + 1) - 2p - 1 \leq -r-1$, which is the same as $r \geq 2$.
\end{proof}

\long\gdef\COMMENT#103{
\vskip10mm
\section{Proof of Proposition \ref{proposition:1.2}}
\label{sec:9}

\vskip2mm
\noindent
For the convergence part, we apply Lemmas \ref{lemma:4.1}-\ref{lemma:4.3}, which imply that 
$\E \xi_n(\gamma) = o(n^{p-r})$ for any collection
$\gamma = (\gamma_1,\dots,\gamma_r)$ with $r<p$, and it remains to consider
the longest tuple $\tilde \gamma = (1,\dots,1)$ of length $r=p$, in which case
$$
\xi_n \equiv \xi_n(\tilde \gamma) \, = \, \frac{X_1^2 \dots 
X_p^2}{(\frac{1}{n}\,S_n)^p}.
$$
According to the representation \eqref{3.2}, we thus obtain the characterization
\be \label{5.1}
\E T_n^p \rightarrow (\E X^2)^p \ \Longleftrightarrow \ \E \xi_n = 
\E\,\frac{X_1^2 \dots X_p^2}{(\frac{1}{n}\,S_n)^p} \rightarrow
(\E X^2)^p.
\en

Let us use the same notations as before.
As we have already noticed in Section \ref{sec:7}, 
\be \label{5.2}
\P(A_{n,p}) \, \leq \, e^{-cn}
\en
and, for any $\gamma \in \mathcal C(p,r)$,
\be \label{5.3}
\E\,\xi_n(\gamma)\,1_{A_{n,r}} \, = \, o(e^{-cn})
\en
for some constant $c>0$, cf. \eqref{4.1}. On the set $B_{n,p}$ 
there is pointwise upper bound 
$$
\xi_n \, = \, \frac{X_1^2 \dots 
X_p^2}{(\frac{1}{n}\,S_p + \frac{1}{n}\, S_{n,p})^p} \, \leq \,
2^p X_1^2 \dots X_p^2.
$$
Hence, the random variables $\xi_n\,1_{B_{n,p}}$ have an an integrable 
majorant. Since also $\xi_n \rightarrow X_1^2 \dots X_p^2$ 
(the law of large numbers) and $1_{B_{n,p}} \rightarrow 1$ a.s. (implied by \eqref{5.2}), 
one may apply the Lebesgue dominated convergence theorem, which gives 
$$
\E \xi_n 1_{B_n} \rightarrow (\E X^2)^p.
$$
Together with \eqref{5.3}, we get $\E \xi_n \rightarrow (\E X^2)^p$,
and \eqref{5.1} is proved.

Turning to the rate of convergence, firs note that by
Lemma \ref{lemma:4.1} and \ref{lemma:4.3}, for any $\gamma \in \mathcal C(p,r)$ with $2 \leq r \leq p-1$,
\be \label{5.4}
\frac{n(n-1)\dots (n-r+1)}{n^p}\ \E \xi_n(\gamma) \, = \, 
o\Big(\frac{\log n}{n}\Big)
\en
For the shortest tuple $\gamma = (p)$ with $r=1$, we apply
Lemma \ref{lemma:4.2} with $q = p + \frac{3}{2}$ and thus assume that
$\P\{X \geq x\} = O(x^{-p -\frac{3}{2}})$. Together with Lemma \ref{lemma:4.1}, this gives
\be \label{5.5}
\frac{n}{n^p}\ \E \xi_n(\gamma) \, = \, O\Big(\frac{1}{\sqrt{n}}\Big).
\en
Note that with this tail hypothesis, necessarily 
$\E X^\beta < \infty$ for any $\beta < p + \frac{3}{2}$. Since $p \geq 2$,
we have that the 3-rd moment $\E X^3$ is finite.

Applying both \eqref{5.4} and \eqref{5.5} in the representation \eqref{3.2} and using \eqref{5.3}, 
we thus obtain that
\be \label{5.6}
\E T_n^p \, = \, \E \xi_n 1_{B_{n,p}} + O\Big(\frac{1}{\sqrt{n}}\Big).
\en

It remains to study an asymptotic behavior of the last expectation in \eqref{5.6}.
On this step the argument is very similar to the one used in the proof
of Proposition \ref{proposition:1.2}. Note that 
$\frac{1}{n}\,S_n \geq \frac{1}{n}\,S_{n,p} \geq \frac{1}{2}$
on the set $B_{n,p}$ as long as $n \geq 2p$.
Applying the Taylor formula, we use an elementary inequality
$$
\Big|\frac{1}{x^p} - 1\Big| \leq p\,2^{p+1}\,|x - 1|, \qquad x \geq \frac{1}{2}.
$$
In particular, on the set $B_{n,p}$
$$
\Big|\frac{1}{(\frac{1}{n}\,S_n)^p} - 1\Big| \, \leq \, p\,2^{p+1}\,
\Big|\frac{1}{n}\,S_n - 1\Big|. 
$$
This gives
\bee
\big|\xi_n - X_1^2 \dots X_p^2\,\big|\, 1_{B_{n,p}}
 & \leq & 
p\,2^{p+1}\, X_1^2 \dots X_p^2\,
\Big|1 - \frac{1}{n}\,S_p - \frac{1}{n}\, S_{n,p}\Big| \\
 & \leq &
p\,2^{p+1}\, X_1^2 \dots X_p^2\,
\Big|1 - \frac{1}{n}\, S_{n,p}\Big| +
\frac{p\,2^{p+1}}{n}\, X_1^2 \dots X_p^2\,S_p,
\ene
so, taking the expected values,
\bee
\big|\E \xi_n1_{B_{n,p}} - \E X_1^2 \dots X_p^2\,1_{B_{n,p}}\big| 
 & \leq &
p\,2^{p+1}\,(\E X^2)^p\,\E\, \Big|1 - \frac{1}{n}\, S_{n,p}\Big| \\
 & & + \ 
\frac{p\,2^{p+1}}{n}\, (\E X^2)^{p-1}\,\E X^3.
\ene
In view of \eqref{5.2},
$$
\E X_1^2 \dots X_p^2\,1_{B_{n,p}} \, = \, \E X_1^2 \dots X_p^2 + e^{-cn}
 \, = \, (\E X^2)^p + o(e^{-cn}).
$$
for some constant $c>0$. Recalling \eqref{5.3}, we thus get that
\bee
\big|\E \xi_n - (\E X^2)^p\big| 
 & \leq &
p\,2^{p+1}\,(\E X^2)^p\,\E\, \Big|1 - \frac{1}{n}\, S_{n,p}\Big| \\
 & & + \ 
\frac{p\,2^{p+1}}{n}\, (\E X^2)^{p-1}\,\E X^3 + o(e^{-cn}).
\ene
Finally
\bee
\E\,\Big|\frac{1}{n}\, S_{n,p} - 1\Big|
 & = &
\frac{1}{n}\,\E\,|S_{n,p} - n| \, \leq \,
\frac{1}{n}\,\E\,|S_{n,p} - (n-p)| + \frac{p}{n} \\
 & \leq &
\frac{1}{n}\,\sqrt{\Var(S_{n,p})} + \frac{p}{n} \, \leq \,
\frac{1}{\sqrt{n}}\,\sqrt{\E X^2} + \frac{p}{n}.
\ene
It remains to refer to \eqref{5.6}.
}


We now consider the lemmas which enable us to get a bound for $\E R_n^{(p)},$ see \eqref{R_n}.
Without loss of generality, let $\E X = 1$ and $n \geq 2p$.

Introduce additional notation: $M_{n,r} = \max_{r < i \leq n} X_i, 
\ \ (1 \leq r \leq p).$

Recall that there is the representation \eqref{3.1} but instead of \eqref{3.2} we write now
\long\gdef\COMMENT#104{
$$
U_n^p \, = \, \sum X_{i_1}^2 \dots X_{i_p}^2
$$
with summation over all $i_1,\dots,i_p \in \{1,\dots,n\}$. For $r = 1,\dots,p$, 
we denoted by $\mathcal C(p,r)$ the collection  of all tuples
$\gamma = (\gamma_1,\dots,\gamma_r)$ of positive integers such that 
$\gamma_1 + \dots + \gamma_r = p$. For any $\gamma \in \mathcal C(p,r)$, 
there are $n(n-1)\dots (n-r+1)$ sequences $X_{i_1},\dots, X_{i_p}$ 
with $r$ distinct terms that are repeated $\gamma_1,\dots,\gamma_r$ times. 
Therefore, by the i.i.d. assumption,
}
\be \label{6.2}
\E R_n^{(p)} \ = \ \sum_{r=1}^p \frac{n(n-1)\dots (n-r+1)}{n^{p+1}}
\sum_{\gamma \in \mathcal C(p,r)} \E\, \eta_n(\gamma) M_n^2,
\en
where
$$
\eta_n(\gamma) = \frac{X_1^{2\gamma_1} \dots 
X_r^{2\gamma_r}}{(\frac{1}{n}\,S_r + \frac{1}{n}\, S_{n,r})^{p+1}}.
$$

In order to bound the expectations on the right-hand side of \eqref{6.2}, 
we use again the events $A_{n,r}$ and $B_{n,r}$, see \eqref{AB}.
From elementary inequalities $M_n \leq S_n$ and
$$
X_1^{2\gamma_1} \dots X_r^{2\gamma_r} \leq 
(X_1 + \dots + X_r)^{2\gamma_1 + \dots + 2\gamma_r} \leq S_n^{2p},
$$
it follows that
$
\eta_n(\gamma) M_n^2 \, \leq \, n^{p+1}\, S_n^{p+1} \, \leq \,
2^p n^{p+1} \, (S_r^{p+1} + S_{n,r}^{p+1}),
$
implying
\be \label{6.3}
\E\, \eta_n(\gamma)\,M_n^2\,1_{A_{n,r}} \leq
2^p n^{p+1}\,\Big(\E\,S_r^{p+1}\,\P(A_{n,r}) + 
\E S_r^{p+1}\, \E\,S_{n,r}^{p+1}\,1_{A_{n,r}}\Big).
\en
Here, by Lemma \ref{lemma:1.3} with $\lambda = 1/2$ and using $n-r \geq \frac{1}{2}\,n$, 
we have
\be \label{6.4}
\P(A_{n,r}) \leq \exp\Big\{-\frac{1}{16\,b^2}\,n\Big\}, \qquad b^2 = \E X^2.
\en
Also, assuming that the moment $\E X^{p+2}$ is finite and applying
the H\"older inequality with exponents ${(p+2)}/{(p+1)}$ and $p+2$, 
one may bound the last expectation in \eqref{6.3} as
$$
\E\,S_{n,r}^{p+1}\,1_{A_{n,r}} \leq 
\big(\E\,S_{n,r}^{p+2}\big)^{\frac{p+1}{p+2}}\,(\P(A_{n,r}))^{\frac{1}{p+2}}.
$$
By Jensen inequality,
$
\E\,S_{n,r}^{p+2} \leq r^{p+1}\,\E X^{p+2}.
$
Applying this in \eqref{6.3}, the inequality \eqref{6.4} yields an exponential bound
\be \label{6.5}
\E\, \eta_n(\gamma)\,M_n^2\,1_{A_{n,r}} \leq e^{-cn}
\en
with some constant $c>0$ which does not depend on $n$.

As for the set $B_{n,r}$, we use on it a point-wise upper bound \\
$
\eta_n(\gamma) \, \leq \, 2^{p+1}\, {X_1^{2\gamma_1} \dots 
X_r^{2\gamma_r}}/{(\frac{1}{n}\,S_r + 1)^{p+1}}.
$
One may also use 
$M_n \leq M_r + M_{n,r} \leq S_r + M_{n,r}$, implying, 
by Jensen's inequality,
$
M_n^2 \ \leq \ (r+1)\,(X_1^2 + \dots + X_r^2 + M_{n,r}^2).
$
It gives
\begin{eqnarray} \label{6.6}
\E\, \eta_n(\gamma)\,M_n^2\,1_{B_{n,r}} 
 & \leq &
2^{p+1}(r+1)\, \sum_{k=1}^r \E\,\frac{X_1^{2\gamma_1} \dots 
X_r^{2\gamma_r}}{(\frac{1}{n}\,S_r + 1)^{p+1}}\,X_k^2  \nonumber \\
 & & + \ 
2^{p+1}(r+1)\, \sum_{k=1}^r \E\,\frac{X_1^{2\gamma_1} \dots 
X_r^{2\gamma_r}}{(\frac{1}{n}\,S_r + 1)^{p+1}}\ \E M_{n,r}^2
\end{eqnarray}
Without an essential loss, the last expectation $\E M_{n,r}^2$ may be replaced 
with $\E M_n^2$. The second last expectation was considered in Lemmas \ref{lemma:4.2} -- \ref{lemma:4.3} 
under the condition \eqref{3.4}, which holds as long as the moment $\E X^{p+1}$ is 
finite. The third last expectation in \eqref{6.6}, due to an additional factor $X_k^2$,
dominates the second last and needs further consideration under stronger
moment assumptions. 
Recalling \eqref{6.5} and returning to \eqref{6.2}, let us summarize in the following
  statement.

\vskip5mm
\begin{lemma} \label{lemma:6.1}
 If the moment $\E X^{p+2}$ is finite, then 
\begin{eqnarray}\label{6.7}
c\,\E R_n^{(p)}
 & \leq & 
e^{-cn} + \max_{1 \leq k \leq r}\, \max_{\gamma \in \mathcal C(p,r)} \ 
\frac{1}{n^{p-r+1}}\
\E\,\frac{X_1^{2\gamma_1} \dots 
X_r^{2\gamma_r}}{(\frac{1}{n}\,S_r + 1)^{p+1}}\ X_k^2 \nonumber \\
 & & + \
\max_{\gamma \in \mathcal C(p,r)} \  \frac{1}{n^{p-r+1}}\
\E\,\frac{X_1^{2\gamma_1} \dots 
X_r^{2\gamma_r}}{(\frac{1}{n}\,S_r + 1)^{p+1}} \ \E M_n^2
\end{eqnarray}
with some constant $c>0$ which does not depend on $n \geq 2p$.
\end{lemma}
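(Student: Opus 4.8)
The plan is to read the bound \eqref{6.7} off directly from the decomposition already prepared in \eqref{6.2}--\eqref{6.6}; beyond careful bookkeeping no new idea is needed, since the substantive estimates have been isolated. First I would start from the exact representation \eqref{6.2} and split every term $\E\,\eta_n(\gamma)M_n^2$ across the complementary events $A_{n,r}$ and $B_{n,r}$ of \eqref{AB}. On the small-probability event $A_{n,r}$ I would invoke the exponential bound \eqref{6.5}, whose derivation rested on the crude pointwise majorant $\eta_n(\gamma)M_n^2 \leq 2^p n^{p+1}(S_r^{p+1}+S_{n,r}^{p+1})$ together with Lemma \ref{lemma:1.3} (giving \eqref{6.4}) and the H\"older/Jensen estimates. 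This is precisely where the finiteness of $\E X^{p+2}$ enters, since the moment $\E\,S_{n,r}^{p+2}\leq r^{p+1}\E X^{p+2}$ must absorb the polynomial factor $n^{p+1}$ before the exponentially small $\P(A_{n,r})$ can take over.

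On the complementary event $B_{n,r}$ I would use the pointwise bound $\eta_n(\gamma)\leq 2^{p+1}X_1^{2\gamma_1}\cdots X_r^{2\gamma_r}/(\frac{1}{n}S_r+1)^{p+1}$ valid there, together with $M_n^2 \leq (r+1)(X_1^2+\dots+X_r^2+M_{n,r}^2)$, which is exactly the content of \eqref{6.6}. Since $M_{n,r}$ is independent of $X_1,\dots,X_r$, the second group of terms factors as a product of $\E M_{n,r}^2$ and an expectation of the form appearing in \eqref{6.7}; I would then replace $\E M_{n,r}^2$ by the larger $\E M_n^2$ at no essential cost, using $M_{n,r}\leq M_n$. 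The first group of terms already carries the required shape $\E[X_1^{2\gamma_1}\cdots X_r^{2\gamma_r}(\frac{1}{n}S_r+1)^{-(p+1)}X_k^2]$.

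Finally I would reassemble the pieces in \eqref{6.2}. The combinatorial prefactor is controlled by $n(n-1)\cdots(n-r+1)/n^{p+1}\leq n^{-(p-r+1)}$, which yields exactly the normalizing power $n^{-(p-r+1)}$ of \eqref{6.7}. All remaining multiplicative constants -- the powers $2^p,2^{p+1}$, the factor $(r+1)$, the cardinality $|\mathcal C(p,r)|$, and the summation over the finitely many values $r=1,\dots,p$ -- are bounded uniformly in $n$ because $p$ is fixed, so they can be swept into a single constant $c>0$, along with the $e^{-cn}$ contributions from the $A_{n,r}$ terms. Bounding each finite sum by its number of terms times the maximal summand then turns the sums over $k$ and over $\gamma\in\mathcal C(p,r)$ into the maxima displayed in \eqref{6.7}.

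The main obstacle -- in truth the only point requiring care rather than routine algebra -- will be ensuring that the constant $c$ is genuinely independent of $n$. This hinges on the estimate \eqref{6.5}: one must check that the Lemma \ref{lemma:1.3} bound $\P(A_{n,r})\leq \exp\{-n/(16b^2)\}$, with $b^2=\E X^2$, decays fast enough to annihilate the factor $n^{p+1}$ uniformly over $1\leq r\leq p$ and over all $\gamma\in\mathcal C(p,r)$. It is here that the H\"older split with exponents $(p+2)/(p+1)$ and $p+2$ and the hypothesis $\E X^{p+2}<\infty$ are indispensable. Once this uniformity is secured, the remainder is a mechanical consolidation of \eqref{6.5} and \eqref{6.6}.
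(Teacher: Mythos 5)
Your proposal is correct and takes essentially the same route as the paper: Lemma \ref{lemma:6.1} is presented there precisely as the summary of the chain \eqref{6.2}--\eqref{6.6}, with the same $A_{n,r}$/$B_{n,r}$ split, the same crude majorant plus Lemma \ref{lemma:1.3} and the H\"older step with exponents $(p+2)/(p+1)$ and $p+2$ (where $\E X^{p+2}<\infty$ enters) giving \eqref{6.5}, and the same pointwise bound on $B_{n,r}$ with $M_n^2\leq (r+1)(X_1^2+\dots+X_r^2+M_{n,r}^2)$ and the replacement of $\E M_{n,r}^2$ by $\E M_n^2$. Your final bookkeeping (bounding $n(n-1)\cdots(n-r+1)/n^{p+1}$ by $n^{-(p-r+1)}$, absorbing the finitely many constants, and turning the sums over $k$ and $\gamma\in\mathcal C(p,r)$ into maxima) matches how the paper passes from \eqref{6.2} to \eqref{6.7}.
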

%

\vskip2mm
\noindent
In order to obtain polynomial bounds for the expectations in \eqref{6.7} under
suitable moment or tail assumptions, we need to develop corresponding analogs 
of Lemmas \ref{lemma:4.2} -- \ref{lemma:4.3}. We will consider separately the cases $r=1$,
$r = p$, and $2 \leq r \leq p-1$ under the tail condition
\be \label{7.1}
\P\{X \geq x\} = O(1/x^{p+\alpha}) \quad {\rm as} \ x \rightarrow \infty,
\en
where $\alpha > 0$ is a parameter. It implies that the moments 
$\E X^q$ are finite for all $q < p+\alpha$ and is fulfilled 
as long as the moment $\E X^{p+\alpha}$ is finite. Put
$
\ep(x) = x^{p+\alpha}\,(1-F(x)), 
$
where $F$ denotes the distribution function of the random variable $X$.

\begin{lemma} \label{lemma:7.1}
Under \eqref{7.1} with $1 < \alpha \leq p+2$,
\be \label{7.2}
\E\, \frac{X_1^{2p}}{(\frac{1}{n}\,X_1 + 1)^{p+1}} \, = \, 
O(n^{p - \alpha + 2}).
\en
Moreover, for any index $1 \leq k \leq r$,
\be \label{7.3}
\E\, \frac{X_1^{2p}}{(\frac{1}{n}\,X_1 + 1)^{p+1}}\ X_k^2 \, = \, 
O(n^{p - \alpha + 2} \log n).
\en
\end{lemma}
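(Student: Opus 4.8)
The plan is to follow the scheme of Lemma \ref{lemma:4.2}, splitting the expectation according to whether the single variable $X_1$ is large or small, bounding the denominator pointwise in each region, and thereby reducing both claims to one-dimensional moment integrals that are controlled through integration by parts against the tail \eqref{7.1}. Note first that for the shortest tuple $r=1$, so the index $k$ in \eqref{7.3} can only equal $1$ and $X_k = X_1$; thus \eqref{7.3} is the same estimate as \eqref{7.2} with $X_1^{2p}$ replaced by $X_1^{2p+2}$.

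First I would write $\E\,\frac{X_1^{2p}}{(\frac{1}{n} X_1 + 1)^{p+1}}$ as the sum of its contributions over $\{X_1 \geq n\}$ and $\{X_1 < n\}$. On the first event $\frac{1}{n} X_1 \geq 1$, so $(\frac{1}{n} X_1 + 1)^{p+1} \geq (\frac{1}{n} X_1)^{p+1}$ and the integrand is at most $n^{p+1} X_1^{p-1}$; on the second the denominator is at least $1$, so the integrand is at most $X_1^{2p}$. This yields
$$
\E\,\frac{X_1^{2p}}{(\frac{1}{n} X_1 + 1)^{p+1}} \, \leq \, n^{p+1}\,\E\, X^{p-1} 1_{\{X \geq n\}} + \E\, X^{2p} 1_{\{X < n\}},
$$
and the identical splitting applied to $X_1^{2p+2}/(\frac{1}{n} X_1 + 1)^{p+1}$ gives $n^{p+1}\,\E\, X^{p+1} 1_{\{X \geq n\}} + \E\, X^{2p+2} 1_{\{X < n\}}$ for \eqref{7.3}.

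Next I would estimate the four moment integrals by integration by parts, as in \eqref{by_part}, writing $1 - F(x) = \ep(x)/x^{p+\alpha}$ with $\ep$ bounded by \eqref{7.1}. For the moments truncated from below one obtains $\E\, X^{p-1} 1_{\{X \geq n\}} = O(n^{-1-\alpha})$ and $\E\, X^{p+1} 1_{\{X \geq n\}} = O(n^{1-\alpha})$, where the hypothesis $\alpha > 1$ guarantees convergence of the tail integral $\int_n^\infty x^{-\alpha}\,dx$ in the second case; multiplying by $n^{p+1}$ produces $O(n^{p-\alpha})$ and $O(n^{p+2-\alpha})$, both within the claimed rates. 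For the moments truncated from above, dropping the nonpositive boundary term leaves $\int_0^n x^{2p-1}(1-F(x))\,dx$ and $\int_0^n x^{2p+1}(1-F(x))\,dx$, whose large-$x$ parts reduce via \eqref{7.1} to $\int_1^n x^{p-1-\alpha}\,dx$ and $\int_1^n x^{p+1-\alpha}\,dx$; these are $O(n^{p-\alpha})$ and $O(n^{p+2-\alpha})$ whenever the exponent exceeds $-1$, and at most logarithmic or bounded otherwise, which in every case stays within $O(n^{p-\alpha+2})$ and $O(n^{p-\alpha+2}\log n)$.

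The one point needing genuine care — and the step I expect to be the main obstacle — is the right endpoint $\alpha = p+2$ of the admissible range $1 < \alpha \leq p+2$. There the integral $\int_1^n x^{p+1-\alpha}\,dx = \int_1^n x^{-1}\,dx$ degenerates to $\log n$ rather than a positive power, and since $n^{p+2-\alpha} = 1$ at this value, this is exactly the source of the factor $\log n$ in \eqref{7.3}; stating the bound as $O(n^{p-\alpha+2}\log n)$ then absorbs it and remains valid uniformly over the whole range (for $\alpha < p+2$ the logarithm is a harmless overestimate). Collecting the dominant contributions gives $O(n^{p-\alpha+2})$ in \eqref{7.2} and $O(n^{p-\alpha+2}\log n)$ in \eqref{7.3}, which completes the argument.
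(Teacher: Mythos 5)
Your proof is correct and follows essentially the same route as the paper's: the same split of the expectation over $\{X_1 \geq n\}$ and $\{X_1 < n\}$ with the pointwise bounds $n^{p+1}X^{p-1}$ (resp.\ $n^{p+1}X^{p+1}$ for \eqref{7.3}) and $X^{2p}$ (resp.\ $X^{2p+2}$), the same integration by parts against $\ep(x) = x^{p+\alpha}(1-F(x))$ using $\alpha > 1$ for the tail integrals, and the same identification of the endpoint $\alpha = p+2$ in $\E\,X^{2p+2}1_{\{X<n\}}$ as the sole source of the $\log n$ factor in \eqref{7.3}. The only cosmetic difference is that for $p < \alpha \leq p+2$ in \eqref{7.2} the paper simply notes that $\E X^{2p}$ is finite while the right-hand side is bounded away from zero, whereas you absorb this case into your convergent-integral branch, which is equivalent.
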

\begin{proof}
The expectation in \eqref{7.2} is equal to and satisfies
\bee
\E\, \frac{X_1^{2p}}{(\frac{1}{n}\,X_1 + 1)^{p+1}}\,1_{\{X_1 \geq n\}} +
\E\, \frac{X_1^{2p}}{(\frac{1}{n}\,X_1 + 1)^{p+1}}\,1_{\{X_1 < n\}}
 & & \\
 & & \hskip-50mm \leq \
n^{p+1}\, \E X^{p-1}\,1_{\{X \geq n\}} + \E\, X^{2p}\,1_{\{X < n\}}  
\ene
Similarly to \eqref{by_part}, we get
\bee
\E\, X^{2p}\,1_{\{X < n\}} 
\leq 
2p \int_0^n x^{p-\alpha-1}\,\ep(x)\,dx \ = \ O(n^{p-\alpha}),
\ene
provided that $\alpha < p$. In the case $\alpha = p$, the last integral is bounded
by $O(\log n)$. In addition,
\bee
\E\, X^p\,1_{\{X \geq n\}}
= 
O(n^{p-\alpha}) + p \int_n^\infty x^{p-\alpha-1}\,\ep_q(x)\,dx \ = \ O(n^{p-\alpha}).
\ene
This proves \eqref{7.2} for $\alpha \leq p$. If $p < \alpha \leq p+2$, then \eqref{7.2} holds automatically, since then $2p < p+\alpha$ and therefore the expectation in \eqref{7.2}
does not exceed the finite moment $\E X_1^{2p}$, while the right-hand side
is bounded away from zero.

For the second assertion, one may assume that $k=1$, in which case the 
expectation in \eqref{7.3} is equal to and satisfies
\bee
\E\, \frac{X_1^{2p+2}}{(\frac{1}{n}\,X_1 + 1)^{p+1}} 
 & = &
\E\, \frac{X_1^{2p+2}}{(\frac{1}{n}\,X_1 + 1)^{p+1}}\,1_{\{X_1 \geq n\}} +
\E\, \frac{X_1^{2p+2}}{(\frac{1}{n}\,X_1 + 1)^{p+1}}\,1_{\{X_1 < n\}} \\
 & \leq &
n^{p+1}\, \E X^{p+1}\,1_{\{X \geq n\}} + \E\, X^{2p+2}\,1_{\{X < n\}}.
\ene
Here, similarly to the previous step, if $\alpha < p+2$,
$$
\E\, X^{2p+2}\,1_{\{X < n\}} \, \leq \,
2p \int_0^n x^{p-\alpha+1}\,\ep(x)\,dx \, = \, O(n^{p-\alpha+2}).
$$
In the case $\alpha = p+2$, the last integral 
is bounded by $O(\log n)$. In addition,
\bee
\E\, X^{p+1}\,1_{\{X \geq n\}}
=
O(n^{-\alpha + 1}) + p \int_n^\infty x^{-\alpha}\,\ep(x)\,dx \ = \ O(n^{-\alpha + 1}).
\ene
\end{proof}
%
\begin{lemma} \label{lemma:7.2}
If the moment $\E X^4$ is finite, then 
$$
\E\, \frac{X_1^2 \dots X_p^2}{(\frac{1}{n}\,S_p + 1)^{p+1}} \, = \, 
O(1).
$$
Moreover, for any index $1 \leq k \leq p$,
$$
\E\, \frac{X_1^2 \dots X_p^2}{(\frac{1}{n}\,S_p + 1)^{p+1}}\ X_k^2 \, = \, 
O(1).
$$
\end{lemma}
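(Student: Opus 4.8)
The plan is to exploit the structure of this longest-tuple case, where $r=p$ and $\gamma=(1,\dots,1)$, so that every variable $X_i$ enters the numerator only to the second power. In this situation the denominator is not actually needed to secure integrability, and the whole estimate reduces to elementary moment bookkeeping. First I would observe that, since the $X_i$ are positive, $\frac{1}{n}\,S_p + 1 \geq 1$, and hence $(\frac{1}{n}\,S_p + 1)^{p+1} \geq 1$. This furnishes the pointwise majorant
$$
\frac{X_1^2 \dots X_p^2}{(\frac{1}{n}\,S_p + 1)^{p+1}} \, \leq \, X_1^2 \dots X_p^2.
$$
Taking expectations and invoking the i.i.d. assumption gives $\E\,X_1^2 \dots X_p^2 = (\E X^2)^p$, a finite constant that does not depend on $n$. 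Since finiteness of $\E X^4$ forces $\E X^2 < \infty$, the first bound $O(1)$ follows immediately.

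For the second assertion I may assume $k=1$ by symmetry. The same majorization, now carrying the extra factor $X_1^2$, yields
$$
\frac{X_1^2 \dots X_p^2}{(\frac{1}{n}\,S_p + 1)^{p+1}}\,X_1^2 \, \leq \, X_1^4\,X_2^2 \dots X_p^2,
$$
so that, by independence, the expectation is at most $\E X^4 \cdot (\E X^2)^{p-1}$. This is precisely where the hypothesis $\E X^4 < \infty$ is used: the inserted factor $X_k^2$ combines with the $X_k^2$ already present to raise a single variable to the fourth power, while the remaining $p-1$ factors contribute only second moments. The resulting bound is again a constant independent of $n$, which gives $O(1)$.

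The point worth emphasizing is that, unlike the cases $r=1$ (Lemma \ref{lemma:7.1}) or $2 \leq r \leq p-1$, there is essentially no obstacle here. Because the indices are all distinct and each appears to a low power, no single variable can be large enough to require the denominator as a counterweight; one may simply discard $(\frac{1}{n}\,S_p + 1)^{p+1}$ rather than use it to offset a heavy term. The only thing to check is that dropping the denominator and inserting one extra squared factor never pushes any variable past its fourth moment, which is exactly what $\E X^4 < \infty$ guarantees.
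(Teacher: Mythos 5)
Your proof is correct and is exactly the paper's argument: the paper simply notes that the expectation does not exceed $\E X^4\,(\E X^2)^{p-1}$, which is the same drop-the-denominator-and-factorize step you spell out. Your elaboration, including the symmetry reduction to $k=1$ and the remark on why no counterweight from the denominator is needed, is a faithful expansion of the paper's one-line proof.
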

\vskip2mm
This statement is clear. The last expectation does not exceed
$\E X^4\, (\E X^2)^{p-1}$ which is finite and does not depend on $n$.

\begin{lemma} \label{lemma:7.4}
Let $\gamma = (\gamma_1,\dots,\gamma_r) \in \mathcal C(p,r)$,
$2 \leq r \leq p$. Under the condition \eqref{7.1} with
$2 < \alpha \leq 4$, for any index $1 \leq k \leq r$,
\be \label{7.5}
\E\,\frac{X_1^{2\gamma_1} \dots X_r^{2\gamma_r}}{(\frac{1}{n}\,S_r + 1)^{p+1}}\,
X_k^2 \, = \, O(n^{p-r-\alpha + 4}).
\en
\end{lemma}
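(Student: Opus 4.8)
The plan is to mirror the proof of Lemma \ref{lemma:4.3}, with two changes dictated by the present statement: the extra factor $X_k^2$ and the higher denominator power $p+1$. First I would absorb $X_k^2$ into the $k$-th variable, setting $\beta_i = \gamma_i$ for $i \neq k$ and $\beta_k = \gamma_k + 1$; these are positive integers with $\beta_1 + \dots + \beta_r = p+1$, so the task becomes bounding $\E\, X_1^{2\beta_1}\dots X_r^{2\beta_r}/(\frac 1n S_r + 1)^{p+1}$ by $O(n^{p-r-\alpha+4})$. Note that the target exponent $p - r - \alpha + 4 = (p-r) + (4 - \alpha) \geq 0$ since $r \leq p$ and $\alpha \leq 4$. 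If $2\beta_i < p + \alpha$ for every $i$, then each moment $\E X^{2\beta_i}$ is finite; dropping the denominator (which is $\geq 1$) and using independence gives the bound $\prod_i \E X^{2\beta_i} = O(1)$, which already suffices. Hence I may assume $2\beta_m \geq p + \alpha$ for some $m$, and since $\alpha > 2$ forces $\beta_m > (p+1)/2$, this index is unique; I relabel so that $m = 1$, whence $\beta_1 \leq p - r + 2$ and $\beta_i < p/2$ for $i \geq 2$.

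Next, exactly as in Lemma \ref{lemma:4.3}, I would split the expectation according to $\{S_r < n\}$ and $\{S_r \geq n\}$. On $\{S_r < n\}$ one has $X_1 < n$ and denominator $\geq 1$, so the expectation is at most $\big(\prod_{i\geq 2}\E X^{2\beta_i}\big)\,\E X^{2\beta_1} 1_{\{X < n\}}$; integrating by parts as in \eqref{by_part} and using that $\ep(x) = x^{p+\alpha}(1-F(x))$ is bounded yields $\E X^{2\beta_1}1_{\{X<n\}} = O(n^{2\beta_1 - p - \alpha})$. Since $\beta_1 \leq p - r + 2$, this is $O(n^{p - 2r + 4 - \alpha})$, comfortably inside the target.

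The substantial part is the set $\{S_r \geq n\}$. Here I would introduce the events $\Omega_i = \{X_i \geq \max_{j\neq i}X_j\}$ as in Lemma \ref{lemma:4.3}; on $\Omega_i$ one has $X_i \leq S_r \leq rX_i$, so $S_r \geq n$ gives $X_i \geq n/r$ and $(\frac1n S_r + 1)^{p+1} \geq (\frac1n X_i)^{p+1}$. The contribution is therefore at most $n^{p+1}\sum_i \E\big[X_1^{2\beta_1}\dots X_r^{2\beta_r}\,X_i^{-(p+1)}\,1_{\{X_i \geq n/r\}\cap\Omega_i}\big]$, and it suffices to show each inner expectation is $O(n^{3-r-\alpha})$. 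When $i = 1$ (the dominant variable is the maximum), the other $X_j \leq X_1$ are integrated out to bounded moments, leaving $O(\E X^{2\beta_1 - p - 1}1_{\{X\geq n\}}) = O(n^{2\beta_1 - 2p - 1 - \alpha})$, and $2\beta_1 - 2p - 1 - \alpha \leq 3 - r - \alpha$ follows again from $\beta_1 \leq p - r + 2$. When $i \geq 2$, I would integrate out the small-exponent variables to bounded moments, integrate the large-exponent $X_1 \leq X_i$ against the truncated estimate $\E X^{2\beta_1}1_{\{X\leq x_i\}} = O(x_i^{2\beta_1 - p - \alpha})$, and then integrate the tail of $X_i$, obtaining $O(n^{2\beta_1 + 2\beta_i - 3p - 1 - 2\alpha})$.

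The main obstacle is this last case $i \geq 2$, where the variable carrying the large exponent is not the one that forces $S_r \geq n$: the truncation and tail estimates must be combined in the right order, and the final inequality $2\beta_1 + 2\beta_i - 3p - 1 - 2\alpha \leq 3 - r - \alpha$ must be verified. It follows from $\beta_1 + \beta_i \leq p - r + 3$ (the remaining $r-2$ indices each contribute at least $1$ to the sum $p+1$), which reduces it to $2 - r \leq p + \alpha$, plainly true. In every case the estimates hold with a strict polynomial margin (at least a factor $n^{-r}$), so the logarithmic terms arising at the boundary values $2\beta_1 = p+\alpha$ are absorbed, and \eqref{7.5} follows.
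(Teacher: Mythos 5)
Your proposal is correct and follows essentially the same route as the paper's proof: absorbing $X_k^2$ into the exponent tuple, disposing of the small-exponent case by bounded moments, splitting on $\{S_r < n\}$ versus $\{S_r \geq n\}$ with the events $\Omega_i$, and verifying the same truncated-moment and tail estimates via $\ep(x)=x^{p+\alpha}(1-F(x))$ in the cases $i=1$ and $i\geq 2$. The only differences are cosmetic: you keep the sharper exponent sum $p+1$ (the paper works with $\mathcal C(p+2,r)$, which only weakens the constraint $\gamma_1 \leq p-r+3$ and so still covers the needed case), and your global split on $2\beta_i \gtrless p+\alpha$ merges the paper's Case 1.2 into the trivial branch while handling the boundary logarithms explicitly via the polynomial slack.
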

\vskip5mm
\begin{proof}
One may reformulate \eqref{7.5} as the statement
\be \label{7.6}
\E\,\frac{X_1^{2\gamma_1} \dots X_r^{2\gamma_r}}{(\frac{1}{n}\,S_r + 1)^{p+1}} 
\, = \, O(n^{p-r-\alpha+4})
\en
in which $\gamma = (\gamma_1,\dots,\gamma_r) \in \mathcal C(p+2,r)$.
If all $\gamma_i \leq \frac{p+2}{2}$, there is nothing to prove,
since then
$$
\E\,\frac{X_1^{2\gamma_1} \dots X_r^{2\gamma_r}}{(\frac{1}{n}\,S_r + 1)^{p+1}}
\, \leq \, \E\,X_1^{2\gamma_1} \dots \E\,X_r^{2\gamma_r} \, \leq \,
(\E X^{p+2})^r.
$$

In the other case, we repeat the arguments used in the proof 
of Lemma \ref{lemma:4.3}. Suppose for definiteness that $\gamma_1$ is the largest number 
among all $\gamma_i$'s. Necessarily $\gamma_1 > \frac{p+2}{2}$ and therefore 
$\gamma_i < \frac{p+2}{2}$ 
for all $i \geq 2$. Since $S_r < n \Rightarrow X_1 < n$, we similarly have
\be \label{7.7}
\E\,\frac{X_1^{2\gamma_1} \dots X_r^{2\gamma_r}}{(\frac{1}{n}\,S_r + 1)^{p+1}}\
1_{\{S_r < n\}} \, \leq \, (\E X^{p+2})^{r-1}\,
\E\,X^{2\gamma_1}\,1_{\{X < n\}}.
\en

To bound the last expectation, note that, if $x = n$ is the point of 
continuity of $F(x)$,
\bee
\E\, X^{2\gamma_1}\,1_{\{X < n\}}
 & = &
- n^{2\gamma_1}\,(1 - F(n)) + 
2\gamma_1 \int_0^n x^{2\gamma_1-1}\,(1-F(x))\,dx \\
 & \leq &
2\gamma_1 \int_0^n x^{2\gamma_1 - 1- p - \alpha}\,\ep(x)\,dx.
\ene
But since $\gamma_1 \leq p-r+3$ (which follows from 
$\gamma_1 + \gamma_2 + \dots + \gamma_r = p+2$ and $\gamma_i \geq 1$), we have
$$
\int_1^n x^{2\gamma_1 - p - \alpha - 1}\,\ep_{p+\alpha}(x)\,dx \, \leq \,
\int_1^n x^{p - 2r - \alpha + 5}\,\ep(x)\,dx.
$$
The last integral grows at the desired rate $O(n^{p-r-\alpha + 4})$ as the worst
case, if and only if $p - 2r - \alpha + 5 \leq p-r-\alpha + 3$, that is,
$r \geq 2$ (which is true). Thus,
$$
\E\,X^{2\gamma_1}\,1_{\{X < n\}} \, = \, O(n^{p-r-\alpha+4})
$$
In view of \eqref{7.7}, this proves \eqref{7.6} for the part of the expectation restricted 
to the set $S_r<n$, that is,
\be \label{7.8}
\E\,\frac{X_1^{2\gamma_1} \dots X_r^{2\gamma_r}}{(\frac{1}{n}\,S_r + 1)^{p+1}}\ 
1_{\{S_r < n\}} \, = \, O(n^{p-r-\alpha + 4}).
\en
Here, the worst situation is attained in the case $r=2$,
$\gamma_1 = p+1$, $\gamma_2 = 1$.

Turning to the expectation over the complementary set $S_r \geq n$,
introduce the events 
$$
\Omega_i = \Big\{X_i \geq \max_{j \neq i} X_j\Big\}, \quad i = 1,\dots,r.
$$
On every such set, $X_i \leq S_r \leq rX_i$. In particular,
$S_r \geq n$ implies $X_i \geq n/r$. Hence, together with \eqref{7.8}, 
\eqref{7.6} would follow from the inequality
\be \label{7.9}
\E\,\frac{X_1^{2\gamma_1} \dots X_r^{2\gamma_r}}{X_i^{p+1}}\,
1_{\{X_i \geq n\} \cap \Omega_i} \, = \, O(n^{-r-\alpha+3})
\en
with an arbitrary index $1 \leq i \leq r$.

{\it Case 1.}
$i \geq 2$. If we fix any values $X_1 = x_1$ and $X_i = x_i$,
then the expectation with respect to $X_j$, $j \neq i$, in \eqref{7.9} will yield 
a bounded quantity (since the $(p+2)$-moment is finite). Hence \eqref{7.9} 
is simplified to
\be \label{7.10}
\E\,X_1^{2\gamma_1} X_i^{2\gamma_i - p - 1}\,
1_{\{X_i \geq n\} \cap \{X_i \geq X_1\}} \, = \, O(n^{-r-\alpha+3}).
\en
Here, the expectation over $X_1$ may be estimated similarly to 
the previous step, by replacing $n$ with $x_i$. Recall that
$\gamma_1 > \frac{p+2}{2}$ and hence $2\gamma_1 \geq p+3$. 

{\it Case 1.1.}
$2\gamma_1 > p + \alpha$. Then we have
$$
\E\,X_1^{2\gamma_1} 1_{\{X_1 \leq x_i\}} \, \leq \, 
2\gamma_1 \int_0^{x_i} x^{2\gamma_1 - 1 - p - \alpha}\,\ep(x)\,dx \, \leq \, 
Cx_i^{2\gamma_1 - p - \alpha}
$$
with some constant $C>0$. Hence, up to a constant, the expectation in \eqref{7.9} is 
bounded by
\bee
\E\,X_i^{2\gamma_i + 2\gamma_1 - 2p - \alpha - 1}\,1_{\{X_i \geq n\}}
 & = &
\int_n^\infty x^{2\gamma_i + 2\gamma_1 - 2p - \alpha - 1}\,dF(x) \\
 & & \hskip-45mm = \ 
n^{2\gamma_i + 2\gamma_1 - 2p - \alpha - 1}\,(1-F(n)) +
c_i \int_n^\infty x^{2\gamma_i + 2\gamma_1 - 2p - \alpha - 2}\,(1-F(x))\,dx \\
 & & \hskip-45mm = \ 
O(n^{2\gamma_i + 2\gamma_1 - 3p - 2\alpha - 1}) +
c_i\int_n^\infty x^{2\gamma_i + 2\gamma_1 - 3p - 2\alpha - 2}\,\ep(x)\,dx \\
 & & \hskip-45mm = \ 
O(n^{2\gamma_i + 2\gamma_1 - 3p - 2\alpha - 1}).
\ene
To obtain \eqref{7.10}, it remains to check that
$2\gamma_i + 2\gamma_1 - 3p - 2\alpha - 1 \leq -r-\alpha+3$.
And indeed, since
\be \label{7.11}
p = \gamma_i + \gamma_1 + \sum_{j \neq i,1} \gamma_j \geq \gamma_i + \gamma_1 + (r-2),\nonumber
\en
the desired relation would follow from
$2\,(p - (r-2)) - 3p - 2\alpha - 1 \leq -r-\alpha+3$, that is, $p+r \geq \alpha$ 
(which is true since $\alpha \leq 4$, while $p,r \geq 2$).
\vskip2mm
{\it Case 1.2.}
  $2\gamma_1 \leq p + \alpha$. Then 
$\E\,X_1^{2\gamma_1} 1_{\{X_1 \leq x_i\}} \leq \E\,X^{p+\alpha}$ which is bounded
in $x_i$, and the expectation in \eqref{7.9} does not exceed up to a constant
\bee
\E\,X_i^{2\gamma_i + 2\gamma_1 - p - 1}\ 1_{\{X_i \geq n\}}
 & = &
\int_n^\infty x^{2\gamma_i + 2\gamma_1 - p - 1}\,dF(x) \\
 & & \hskip-45mm = \ 
n^{2\gamma_i + 2\gamma_1 - p - 1}\,(1-F(n)) +
c_i \int_n^\infty x^{2\gamma_i + 2\gamma_1 - p - 2}\,(1-F(x))\,dx \\
 & & \hskip-45mm = \ 
O(n^{2\gamma_i + 2\gamma_1 - 2p - \alpha - 1}) +
c_i  \int_n^\infty 
x^{2\gamma_i + 2\gamma_1 - 2p - \alpha - 2}\,\ep(x)\,dx \ = \ 
O(n^{2\gamma_i + 2\gamma_1 - 2p - \alpha - 1}).
\ene
To obtain \eqref{7.10}, it remains to check that
$$
2\gamma_i + 2\gamma_1 - 2p - \alpha - 1 \leq -r-\alpha+3. 
$$
And indeed, by \eqref{7.10}, the desired relation would follow from
$2\,(p - (r-2)) - 2p - \alpha - 1 \leq -r-\alpha+3$, that is, $r \geq 0$.

{\it Case 2.}
 $i = 1$. If we fix any value $X_1 = x_1$ in \eqref{7.9},
the expectation with respect to $X_j$, $j \neq 1$, yields a bounded
quantity. Hence \eqref{7.9} is simplified to
$$
\E\,X^{2\gamma_1 - p - 1}\, 1_{\{X \geq n\}} = O(n^{-r-\alpha+3}).
$$
We have
\bee
\E\,X^{2\gamma_1 - p - 1}\, 1_{\{X \geq n\}} 
 & = &
\int_n^\infty x^{2\gamma_1 - p - 1}\,dF(x) \\
 & & \hskip-20mm = \
n^{2\gamma_1 - p - 1}\,(1 - F(n)) + (2\gamma_1 - p - 1)
\int_n^\infty x^{2\gamma_1 - p - 2}\,(1-F(x))\,dx \\
 & & \hskip-20mm = \
O(n^{2\gamma_1 - 2p- \alpha - 1}) +
\int_n^\infty x^{2\gamma_1 - 2p - \alpha - 2}\,\ep(x)\,dx \ = \
O(n^{2\gamma_1 - 2p - \alpha - 1}).
\ene
It remains to see that $2\gamma_1 - 2p - \alpha - 1 \leq -r-\alpha + 3$,
that is, $2\gamma_1 + r \leq 2p + 4$. But this follows from
$p+2 = \gamma_1 + \dots + \gamma_r \geq \gamma_1 + (r-1) \geq \gamma_1 + \frac{r}{2}$.
\end{proof} 

\long\gdef\COMMENT#108{
%
%
\noindent
Let us apply these lemmas in the inequality \eqref{6.7}. Using the bounds
for the cases $r=1$, $r = p$, and $2 \leq r \leq p-1$, and assuming that
\eqref{7.1} is fulfilled for an integer $p \geq 2$ and 
a real number $2 < \alpha \leq 4$, they imply that 
$$
c\,\E R_n^{(p)} \, \leq \,
e^{-cn} + \Big(\frac{1}{n^{\alpha - 2}} + \frac{1}{n} + 
\frac{1}{n^{\alpha-3}}\Big) +
\Big(\frac{\log n}{n^{\alpha - 2}} + \frac{1}{n} + \frac{\log n}{n^2}\Big) \ \E M_n^2,
$$
where the constant $c>0$ does not depend on $n$.
To simplify, we have to assume that $\alpha \geq 3$ leading to
\be \label{8.1}
c\,\E R_n \, \leq \, \frac{1}{n^{\alpha-3}} + \frac{\log n}{n}\, \E M_n^2.
\en

The last expectation in \eqref{8.1} may also be estimated
in a polynomial way. Namely, since for any $q \geq 2$,
$$
M_n^2 \, \leq \, (X_1^2 + \dots + X_n^q)^{\frac{2}{q}},
$$
we get, by Jensen's inequality,
$$
\E M_n^2 \, \leq \, 
(\E X_1^q + \dots + \E X_n^q)^{\frac{2}{q}} \, = \, 
n^{\frac{2}{q}}\,(\E X^q)^{\frac{2}{q}}.
$$
Therefore, choosing $2 < q < p+\alpha$ to be sufficiently close to $p+\alpha$,
and using $\alpha = 7/2$, from \eqref{8.1} we obtain the following statement. Recall that 
the functional $R_n^{(p)}$ was defined in \eqref{R_n}.

\vskip5mm
\begin{proposition}\label{propos:8.1}

Given an integer $p \geq 2$, suppose that
$$
\P\{X \geq x\} = O(1/x^{p+7/2}) \quad {\rm as} \ x \rightarrow \infty.
$$
then
$$
\E R_n^{(p)} \, = \, O\Big(\frac{1}{\sqrt{n}}\Big).
$$
\end{proposition}
\vskip2mm
As another example, when $\E X^{p+4}$ is finite, the rate can be improved to
$\E R_n^{(p)} = O(n^{-\frac{p-2}{p + 4}})$. Moreover, if
$\E\, e^{\ep X} < \infty$ for some $\ep > 0$, then
\be \label{8.2}
\E R_n^{(p)} = O\Big(\frac{(\log n)^2}{n}\Big).
\en
Indeed, the finiteness of the exponential moment of $X$
is actually equivalent to the family of moment bounds
$$
(\E X^q)^{1/q} \leq cq, \qquad q \geq 1,
$$
which for $q \geq 2$ give
\bee
\E M_n^2 
 & \leq & 
\E\,(X_1^q + \dots + X_n^q)^{2/q} \\
 & \leq & 
(\E X_1^q + \dots + \E X_n^q)^{2/q} \ = \ n^{2/q}\, (\E X^q)^{2/q} \ \leq \
(cq)^2\, n^{2/q}.
\ene
Choosing here $q$ to be of order $\log n$, we arrive at
$$
\E M_n^2 \leq C\,(\log n)^2
$$
with a constant $C$ independent of $n$. Applying this bound in \eqref{8.1}
with $\alpha=4$, we then obtain the much better rate as in \eqref{8.2}.
}

\section{Proofs of main results}
\label{sec:4}
\begin{proof} {\it{of Theorem \ref{theorem1}}}.
Fix $k\ge 3.$ In the following we omit $k$ in notation when it is not necessary. So we write $S_{n}=S_{n}(k),$ $\lambda=\lambda(k),$ $Z=Z_k,$ $I=I(k).$ We say that a random variable $V$ has a mixed Poisson distribution with mixing distribution $F$  when, for every integer $m\ge 0,$
\[
\P(V=m)=\E \left(e^{-\Lambda}\frac{\Lambda^{m}}{m!}\right),
\]
where $\Lambda$ is a random variable with distribution $F.$

Put $\Lambda=\sum_{\alpha \in I}\E_{W_{1}, W_{2},...,W_{n}}Y_{\alpha}.$

We have for any real function $h:$ $\{0,1,2,...\} \rightarrow \R$
\be \label{100}
|\E h(S_{n})-\E h(Z)|\le \E|\E_{W_{1},...,W_{n}}h(S_{n})-\E_{W_{1},...,W_{n}}h(V)| + |\E h(V)- \E h(Z)|.
\en
For each $\alpha \in I$, define $B_{\alpha}\equiv \{ \beta \in I:$ $\alpha$ and $\beta$ have at least one edge in common$\}$.
Put 
\[
b_{1}=\sum_{\alpha \in I}\sum_{\beta\in B_{\alpha}}p_{\alpha}p_{\beta},
\]
where $p_{\alpha}=\E_{W_{1},...,W_{n}}Y_{\alpha}.$
\[
b_{2}=\sum_{\alpha \in I}\sum_{\alpha\neq\beta\in B_{\alpha}}p_{\alpha\beta},
\]
where $p_{\alpha\beta}=\E_{W_{1},...,W_{n}}Y_{\alpha}Y_{\beta}.$

Note that, for any $\alpha\in I$ and $\beta\in I \setminus B_{\alpha},$ the cycles $\alpha$ and $\beta$ may have joint vertices but the do not have any edge in common. Therefore, for such $\alpha$ and $\beta$, the random variable $Y_{\alpha}$ and $Y_{\beta}$ are conditionally independent given weights $W_{1},...,W_{n}.$ Thus, by Theorem 1 in \cite{Goldstein}, proved with the Chen--Stein method, and relations \eqref{1} and \eqref{100}, we get
\be \label{101}
\parallel \mathscr{L}(S_{n}(k))-\mathscr{L}(Z) \parallel\, \lesssim \E(b_{1}+b_{2})+|\E\Lambda-\lambda(k) |,
\en
where we write here and in the following that $A_n\, \lesssim B_n$ or $A_n\, \gtrsim B_n$ when there exists a positive constant $c$ not depending on $n$ such that $A_n\, \leq c B_n$ or $A_n\, \geq c B_n.$

For random variables $b_1$ and $b_2$, we get, cf. \eqref{3.2},
 by the i.i.d. assumption and simple inequality for positive $c$ and $d: 2cd \leq c^2 + d^2$,
\be \label{b_1}
\E (b_1 + b_2) \ \lesssim \ \sum_{p=k+2}^{2k} \sum_{r=k}^{p-1} \frac{n(n-1)\dots (n-r+1)}{n^p}
\sum_{\gamma \in \mathcal C(p,r)} \E \psi_n(\gamma),
\en
where
$$ \label{psi}
\psi_n(\gamma) = {W_1^{2\gamma_1} \dots 
W_r^{2\gamma_r}}/{(\frac{1}{n}\,L_r + \frac{1}{n}\, L_{n,r})^p}
$$
and 
$$
L_r = W_1 + \dots + W_r, \quad L_{n,r} = W_{r+1} + \dots + W_n.
$$
For example, we have minimal value $p=k+2$ and $r=k+1$ for the cycles $\alpha = (1,2,\dots,k)$ and $\beta = (1,2,\dots,k-1,k+1).$ Then
$$
\E p_{\alpha\beta}\lesssim \E W_1^4 W_2^2  \dots  W_{k-1}^2 W_k^2 W_{k+1}^2/L_n^{k+2}.
$$
We have maximal value $p=2k$ and $r=k$ for the cycle $\alpha = (1,2,\dots,k)$. Then
$$
\E p_{\alpha}^2 \leq \E W_1^4 \dots W_k^4/L_n^{2k}.
$$
Lemmas \ref{lemma:4.1} and \ref{lemma:4.3} and inequality \eqref{b_1} under condition \eqref{asumpW} imply
\be \label{bb}
\E (b_1 + b_2) = o\left(\frac{\log n}{n}\right).
\en
Now we construct an upper bound for the last summand in \eqref{101}.

It is clear that 
\be \label{upper}
\E\Lambda \leq \frac{1}{2k}\E \left(\frac{(W_1^2 + \dots +  W_n^2)^k}{L_n^k}\right).
\en

On the other hand, note that for a positive $a$ and positive sequence $\{x_i\}, 
i = 1,2,\dots k,$ we have, see e.g. Lemma 8 in \cite{Liu_moment}
$$
\prod_{i=1}^k\frac{1}{a+x_i}\geq \frac{1}{a^k} - 
\frac{\sum_{i=1}^kx_i}{a^{k+1}}.
$$
Therefore, by the i.i.d. assumption, we get
\begin{eqnarray}
\label{lower}
\E\Lambda &\geq &\frac{1}{2k}\E \left(\frac{(W_1^2 + \dots +  W_n^2)^k}{L_n^k}\right) - c_1 \sum_{r=1}^{k-1} \frac{n(n-1)\dots (n-r+1)}{n^k}
\sum_{\gamma \in \mathcal C(k,r)} \E \psi_n(\gamma)\nonumber\\
&&- \, 
\frac{c_2}{n}\sum_{\gamma \in \mathcal C(k+1,k)} \E \left(\frac{W_1^{2\gamma_1} \dots 
W_r^{2\gamma_r}}{(L_n/n)^{k+1}}\right),
\end{eqnarray} 
where $c_1$ and $c_2$ do not depend on $n$.

Combining Lemmas \ref{lemma:4.1} and \ref{lemma:4.3} and relations \eqref{1.2}, \eqref{101}, \eqref{bb}, \eqref{upper} and \eqref{lower}, we finish the proof of Theorem \ref{theorem1}. 
\end{proof}
\begin{proof}  {\it{of Theorem \ref{proposition:1.2}}}
We split the proof of the Theorem into several steps. Without loss of generality, let $\E X = 1$.

{\it Necessity.} By Lemma \ref{lemma:3.2}, for the convergence $\E T_n^p \rightarrow (\E X^2)^p$, 
it is necessary that all summands in \eqref{3.2} with $r < p$ should be vanishing 
at infinity. In particular, for the shortest tuple $\gamma$ with $r=1$
as in Lemma \ref{lemma:3.1}, it should be required that
${n^{1-p}}\,\E\,\xi_n(\gamma) \rightarrow 0$ as $n \rightarrow \infty$.
Hence, from the inequality \eqref{short} it follows that
\be  \label{3.6}
\E X^p\, 1_{\{X \geq n\}} = o(1/n).\nonumber
\en
This relation may be simplified in terms of the tails
of the distribution of $X$. Indeed,
$
\E\,X^p\,1_{\{X \geq n\}} \, \geq \, n^p\,\P\{X \geq n\},
$
so that the property \eqref{3.4}
is necessary for the convergence 
 $\E T_n^p \rightarrow (\E X^2)^p$.

\long\gdef\COMMENT#105{
For the particular collection $\gamma = (p)$ with $r=1$ we have, see \eqref{xi},
\bee
\E \xi_n(\gamma) 
 & \geq &
\E_{X_1}\,\frac{X_1^{2p}}{(\frac{1}{n}\,X_1 + \frac{1}{n}\,\E_{S_{n,1}}\, S_{n,1})^p}\\
 & = &
\E\,\frac{X^{2p}}{(\frac{1}{n}\,X + \frac{n-1}{n})^p} \ \geq \
2^{-p}\,n^p \ \E X^p\, 1_{\{X \geq n\}},
\ene
where we applied Jensen's inequality.
In view of \eqref{4.2}, for the boundedness of the sequence $\E T_n^p$
it is therefore necessary that
\be \label{3.3}
\E X^p\, 1_{\{X \geq n\}} = o(1/n)
\en
as $n \rightarrow \infty$. In particular, the moment $\E X^p$ has to be finite.

The relation \eqref{3.3} may be simplified in terms of the tails
of the distribution of $X$. Indeed,
$$
\E\,X^p\,1_{\{X \geq n\}} \, \geq \, n^p\,\P\{X \geq n\},
$$
\long\gdef\COMMENT#102{
so that the property \eqref{3.4}
is necessary for \eqref{3.3}. On the other hand, from \eqref{3.4} it follows that
$\ep_{p+1}(x) \rightarrow 0$ as $x \rightarrow \infty$.
Hence, assuming without loss of generality that $x = n$ is the point of 
continuity of $F$, we get
\begin{eqnarray} \label{tail}
\E\, X^p\,1_{\{X > n\}}
 & = &
\int_n^\infty x^p\,dF(x) \ = \
n^p\,(1 - F(n)) + p\,\int_n^\infty x^{p-1}\,(1-F(x))\,dx 
\nonumber\\
 & = &
o(1/n) + p\,\int_n^\infty \frac{\ep(x)}{x^2}\,dx \\ 
& = &
o(1/n) + 
p\,\sup_{x > n} \, \ep(x)\,\frac{1}{n} \ = \ o(1/n).\nonumber
\end{eqnarray}
}
Thus, the tail condition \eqref{3.4} is necessary for the convergence 
$\E T_n^p \rightarrow (\E X^2)^p$ as stated in Theorem \ref{proposition:1.2}
(and actually for the boundedness of the $p$-th moments of $T_n$).
}
{\it Sufficiency and rate of convergence.}
First note that the condition \eqref{3.4} ensures that the moment $\E X^p$ is finite. 
For the convergence part of Theorem \ref{proposition:1.2} 
we apply Lemmas \ref{lemma:4.1}-\ref{lemma:4.3}, 
which imply that $\E \xi_n(\gamma) = o(n^{p-r})$ for any collection
$\gamma = (\gamma_1,\dots,\gamma_r)$ with $r<p$. It remains to take into account
Lemma \ref{lemma:3.2} about the longest tuple $\tilde \gamma = (1,\dots,1)$ of length $r=p$
and to recall the representation \eqref{3.2}.

\long\gdef\COMMENT#107{
For the convergence part, we apply Lemmas \ref{lemma:4.1}-\ref{lemma:4.3}, which imply that 
$\E \xi_n(\gamma) = o(n^{p-r})$ for any collection
$\gamma = (\gamma_1,\dots,\gamma_r)$ with $r<p$. It remains to to take into account Lemma \ref{lemma:3.2} about
the longest tuple $\tilde \gamma = (1,\dots,1)$ of length $r=p$, in which case
$$
\xi_n \equiv \xi_n(\tilde \gamma) \, = \, \frac{X_1^2 \dots 
X_p^2}{(\frac{1}{n}\,S_n)^p}.
$$
According to the representation \eqref{3.2}, we thus obtain the characterization
\be \label{5.1}
\E T_n^p \rightarrow (\E X^2)^p \ \Longleftrightarrow \ \E \xi_n = 
\E\,\frac{X_1^2 \dots X_p^2}{(\frac{1}{n}\,S_n)^p} \rightarrow
(\E X^2)^p.
\en

Let us use the same notations as before.
We have \eqref{5.2} and \eqref{4.1}.
On the set $B_{n,p}$ 
there is pointwise upper bound 
$$
\xi_n \, = \, \frac{X_1^2 \dots 
X_p^2}{(\frac{1}{n}\,S_p + \frac{1}{n}\, S_{n,p})^p} \, \leq \,
2^p X_1^2 \dots X_p^2.
$$
Hence, the random variables $\xi_n\,1_{B_{n,p}}$ have an an integrable 
majorant. Since also $\xi_n \rightarrow X_1^2 \dots X_p^2$ 
(the law of large numbers) and $1_{B_{n,p}} \rightarrow 1$ a.s. (implied by \eqref{5.2}), 
one may apply the Lebesgue dominated convergence theorem, which gives 
$$
\E \xi_n 1_{B_n} \rightarrow (\E X^2)^p.
$$
Together with \eqref{4.1}, we get $\E \xi_n \rightarrow (\E X^2)^p$,
and \eqref{5.1} is proved.

{\it Rate of convergence.} 
}
Turning to the rate of convergence, first note that by
Lemma \ref{lemma:4.1} and \ref{lemma:4.3}, for any $\gamma \in \mathcal C(p,r)$ with $2 \leq r \leq p-1$,
\be \label{5.4}
\frac{n(n-1)\dots (n-r+1)}{n^p}\ \E \xi_n(\gamma) \, = \, 
o\Big(\frac{\log n}{n}\Big)
\en
For the shortest tuple $\gamma = (p)$ with $r=1$, we apply
Lemma \ref{lemma:4.2} with $q = p + \frac{3}{2}$ and thus assume that
$\P\{X \geq x\} = O(x^{-p -\frac{3}{2}})$. Together with Lemma \ref{lemma:4.1}, this gives
\be \label{5.5}
\frac{n}{n^p}\ \E \xi_n(\gamma) \, = \, O\Big(\frac{1}{\sqrt{n}}\Big).
\en
Note that with this tail hypothesis, necessarily 
$\E X^\beta < \infty$ for any $\beta < p + \frac{3}{2}$. Since $p \geq 2$,
we have that the 3-rd moment $\E X^3$ is finite.
 Applying both \eqref{5.4} and \eqref{5.5} in the representation \eqref{3.2} and using \eqref{4.1}, 
we thus obtain that
\be \label{5.6}
\E T_n^p \, = \, \E \xi_n 1_{B_{n,p}} + O({1}/{\sqrt{n}}),
\en
with $\xi_n$ defined in \eqref{xi_n}.

It remains to study an asymptotic behavior of the last expectation in \eqref{5.6}.
Note that 
$\frac{1}{n}\,S_n \geq \frac{1}{n}\,S_{n,p} \geq \frac{1}{2}$
on the set $B_{n,p}$ as long as $n \geq 2p$.
Applying the Taylor formula, we use an elementary inequality
$
|{x^{-p}} - 1| \leq p\,2^{p+1}\,|x - 1|$ for $  x \geq \frac{1}{2}.
$
In particular, on the set $B_{n,p}$ one has 
$
\Big|{(\frac{1}{n}\,S_n)^{-p}} - 1\Big| \, \leq \, p\,2^{p+1}\,
\Big|\frac{1}{n}\,S_n - 1\Big|. 
$
This gives
\bee
\big|\xi_n - X_1^2 \dots X_p^2\,\big|\, 1_{B_{n,p}}
 & \leq & 
p\,2^{p+1}\, X_1^2 \dots X_p^2\,
\Big|1 - \frac{1}{n}\,S_p - \frac{1}{n}\, S_{n,p}\Big| \\
 & \leq &
p\,2^{p+1}\, X_1^2 \dots X_p^2\,
\Big|1 - \frac{1}{n}\, S_{n,p}\Big| +
\frac{p\,2^{p+1}}{n}\, X_1^2 \dots X_p^2\,S_p,
\ene
so, taking the expected values,
\bee
\big|\E \xi_n1_{B_{n,p}} - \E X_1^2 \dots X_p^2\,1_{B_{n,p}}\big| 
  &\leq &
p\,2^{p+1}\,(\E X^2)^p\,\E\, \Big|1 - \frac{1}{n}\, S_{n,p}\Big| \\
 & & 
 + \ 
\frac{p\,2^{p+1}}{n}\, (\E X^2)^{p-1}\,\E X^3.
\ene
In view of \eqref{5.2},
$$
\E X_1^2 \dots X_p^2\,1_{B_{n,p}} \, = \, \E X_1^2 \dots X_p^2 + e^{-cn}
 \, = \, (\E X^2)^p + o(e^{-cn}).
$$
for some constant $c>0$. Recalling \eqref{4.1}, we thus get that
\bee
\big|\E \xi_n - (\E X^2)^p\big| 
 & \leq &
p\,2^{p+1}\,(\E X^2)^p\,\E\, \Big|1 - \frac{1}{n}\, S_{n,p}\Big| \\
 & & + \ 
\frac{p\,2^{p+1}}{n}\, (\E X^2)^{p-1}\,\E X^3 + o(e^{-cn}).
\ene
Finally
\bee
\E\,\Big|\frac{1}{n}\, S_{n,p} - 1\Big|
 & = &
\frac{1}{n}\,\E\,|S_{n,p} - n| \, \leq \,
\frac{1}{n}\,\E\,|S_{n,p} - (n-p)| + \frac{p}{n} \\
 & \leq &
\frac{1}{n}\,\sqrt{\Var(S_{n,p})} + \frac{p}{n} \, \leq \,
\frac{1}{\sqrt{n}}\,\sqrt{\E X^2} + \frac{p}{n}.
\ene
It remains to refer to \eqref{5.6}.
\end{proof}
\begin{proof}  {\it{of Theorem \ref{proposition:3}}}.
\noindent
Let us apply Lemmas \ref{lemma:7.1}--\ref{lemma:7.4} in the inequality \eqref{6.7}. Using the bounds
for the cases $r=1$, $r = p$, and $2 \leq r \leq p-1$, and assuming that
\eqref{7.1} is fulfilled for an integer $p \geq 2$ and 
a real number $2 < \alpha \leq 4$, they imply that 
$$
\E R_n^{(p)} \, \leq \,
e^{-cn} + \Big(\frac{1}{n^{\alpha - 2}} + \frac{1}{n} + 
\frac{1}{n^{\alpha-3}}\Big) +
\Big(\frac{\log n}{n^{\alpha - 2}} + \frac{1}{n} + \frac{\log n}{n^2}\Big) \ \E M_n^2,
$$
where the constant $c>0$ does not depend on $n$.
To simplify, we have to assume that $\alpha \geq 3$ leading to
\be \label{8.1}
c\,\E R_n \, \leq \, \frac{1}{n^{\alpha-3}} + \frac{\log n}{n}\, \E M_n^2.
\en

The last expectation in \eqref{8.1} may also be estimated
in a polynomial way. Namely, since for any $q \geq 2$, one has 
$
M_n^2 \, \leq \, (X_1^2 + \dots + X_n^q)^{{2}/{q}},
$
we get, by Jensen's inequality,
$$
\E M_n^2 \, \leq \, 
(\E X_1^q + \dots + \E X_n^q)^{\frac{2}{q}} \, = \, 
n^{\frac{2}{q}}\,(\E X^q)^{\frac{2}{q}}.
$$
Therefore, choosing $2 < q < p+\alpha$ to be sufficiently close to $p+\alpha$,
and using $\alpha = 7/2$, from \eqref{8.1} we obtain \eqref{th-3}.
\long\gdef\COMMENT#109{
the following statement. Recall that 
the functional $R_n^{(p)}$ was defined in \eqref{R_n}.

\vskip5mm
\begin{proposition}\label{propos:8.1}

Given an integer $p \geq 2$, suppose that
$$
\P\{X \geq x\} = O(1/x^{p+7/2}) \quad {\rm as} \ x \rightarrow \infty.
$$
then
$$
\E R_n^{(p)} \, = \, O\Big(\frac{1}{\sqrt{n}}\Big).
$$
\end{proposition}
\vskip2mm
As another example, 
}
When $\E X^{p+4}$ is finite, we get \eqref{th-31}.

At last, to prove \eqref{8.2}, note that 
the finiteness of the exponential moment of $X$
is actually equivalent to the family of moment bounds
$
(\E X^q)^{1/q} \leq cq, $ for $ q \geq 1,
$
which for $q \geq 2$ give
\bee
\E M_n^2 
  \leq  
\E\,(X_1^q + \dots + X_n^q)^{2/q} 
  \leq  
(\E X_1^q + \dots + \E X_n^q)^{2/q} \ 
\leq \
(cq)^2\, n^{2/q}.
\ene
Choosing here $q$ to be of order $\log n$, we arrive at
$
\E M_n^2 \leq C\,(\log n)^2
$
with a constant $C$ independent of $n$. Applying this bound in \eqref{8.1}
with $\alpha=4$, we then obtain the much better rate as in \eqref{8.2}.
\end{proof}
\begin{acknowledgement}
This research was done within the framework of the Moscow Center for Fundamental and Applied Mathematics, Lomonosov Moscow State University, and HSE University Basic Research Programs. Theorem 1 was proved under support
of the RSF grant No. 18-11-00132.  Research of S. Bobkov was partially supported by NSF grant DMS--1855575.
\end{acknowledgement}
%
%
%
%

\end{document}